\newtheorem{theorem}{Theorem}
\newtheorem{lemma}[theorem]{Lemma}
\newtheorem{corollary}[theorem]{Corollary}
\newtheorem{proposition}[theorem]{Proposition}
\def\semicolon{\textup{;}}
\def\x{\mathbf{x}}
\def\0{\mathbf{0}}
\begin{document}
\nocite{*}

\title{Charting the space of chemical nut graphs}

\author[1]{Patrick~W.~Fowler}
\author[2,3,4,5]{Toma{\v z} Pisanski}
\author[2,3,4]{Nino Ba{\v s}i{\'c}}

\affil[1]{Department of Chemistry, University of Sheffield, Sheffield S3 7HF, UK}
\affil[2]{FAMNIT, University of Primorska, Koper, Slovenia}
\affil[3]{IAM, University of Primorska, Koper, Slovenia}
\affil[4]{Institute of Mathematics, Physics and Mechanics, Ljubljana, Slovenia}
\affil[5]{Faculty of Mathematics and Physics, University of Ljubljana, Ljubljana, Slovenia}

\date{\today}
\maketitle
\begin{abstract}
\noindent Molecular graphs of unsaturated carbon frameworks
or hydrocarbons pruned of hydrogen atoms,
are chemical graphs. 
A \emph{chemical graph} is a connected simple graph of maximum degree $3$ or less. 
A \emph{nut graph} is a connected simple graph with a singular adjacency matrix that has one zero eigenvalue 
and a non-trivial kernel eigenvector 
without zero entries. 
Nut graphs have no vertices of degree $1$: they are \emph{leafless}.
The intersection of these two sets, the \emph{chemical nut graphs}, is of interest in applications in chemistry 
and molecular physics, 
corresponding to structures with fully distributed radical reactivity and omniconducting behaviour 
at the Fermi level. 
A chemical nut graph consists of $v_2 \ge 0$ vertices of degree $2$ and an even number,
$v_3 > 0$, of vertices of degree $3$.
With the aid of systematic local constructions that produce larger nut graphs from smaller, 
the combinations $( v_3, v_2)$ corresponding to 
realisable chemical nut graphs are characterised. 
Apart from a finite set of small cases, and two simply defined infinite series, 
all combinations $(v_3, v_2 )$ with even values of $v_3  > 0$ are realisable as chemical nut graphs. 
Of these combinations, only
$(20,0)$ cannot be realised by a planar chemical nut graph.
The main result characterises the ranges of edge counts for chemical
nut graphs of all orders $n$. 
\end{abstract}

{\bf Keywords:} Chemical graph, nut graph, singular graph, Fowler construction, planar graph.

{\bf Math.\ Subj.\ Class.\ (2020):} 05C50, 05C92.

\section{Introduction}
Nut graphs constitute an important subclass of graphs. 
They were introduced and first studied in a series of papers by Sciriha and co-workers
\cite{GutmanSciriha-MaxSing,Sc1998,ScOnRnkGr99,Sc2007,Sc2008,Prop23}.
A \emph{nut graph} has a singular adjacency matrix, the spectrum of which contains exactly one zero eigenvalue, 
with a corresponding non-trivial kernel eigenvector
that  has no zero entries.  
In chemistry, this property has significant consequences for the modelling of carbon frameworks 
at molecular and nano scales.
The graphs that are possible models for unsaturated carbon frameworks are the chemical graphs.
A \emph{chemical graph} is simple (without multiple edges or loops), connected, and of maximum 
degree $\le 3$ (to allow for 
each four-valent carbon atom retaining one valence for participation in a $\pi$ system). 
Eigenvectors and eigenvalues of chemical graphs correspond to the distributions of $\pi$ molecular orbitals and 
their energies within the
qualitative H{\"u}ckel model \cite{Streitwieser_1961}.
To qualify as a model of a carbon $\pi$-system, a nut graph must also be a chemical graph. 
In this model, the kernel eigenvector of a nut graph
corresponds to a non-bonding orbital, occupation of which by a single electron
leads to spin density, and hence radical reactivity distributed over all carbon centres 
\cite{Prop23}. 
Nut graphs also have unique status as \emph{strong omniconductors} \cite{PWF_CPL_2013,PWF_JCP_2014} of nullity one,
in the {H{\"u}ckel-based} version of the SSP (source-and-sink potential) 
model of ballistic molecular conduction \cite{Ern_JCP_2007a,Ern_JCP_2011b}.

For obvious reasons, we call a nut graph that satisfies the conditions
for a chemical graph a \emph{chemical nut graph}.
Given their chemical interest, a natural question is: for which combinations of 
order ($n$, number of vertices) and size ($m$, number of edges) do chemical nut graphs 
exist? It turns out to be possible to supply a complete answer to this question.

\section{Initial considerations}
A nut graph has various useful properties. 
These include the following: a nut graph is connected;
it is non-bipartite; it has no vertices of degree $1$ \cite{ScirihaGutman-NutExt}. 
We will refer to graphs that have no vertices of degree $1$ as \emph{leafless}.
Since the number of vertices of odd degree of a simple graph is even, 
a chemical nut graph is leafless, with
$v_2$ vertices of degree $2$, 
and $v_3$ vertices of degree $3$, where $v_3$ is even. 
Order and size of a chemical nut graph are related to these numbers by 
$n = v_2 + v_3$ and $m = v_2 + (3v_3)/2$.
This suggests a mapping of chemical nut graphs onto a grid parameterised by
even integers $v_3 \ge 0$ and  integers $v_2 \ge 0$.
 
Hence, our initial question can be rephrased as: for what combinations $(v_3, v_2)$ do chemical nut graphs exist? 
In other words: which pairs $(v_3,v_2)$ are \emph{realisable}? In this paper we give a complete answer to this question.
We also show that only one realisable pair, namely $(20,0)$ cannot be realised by a planar chemical nut graph.
(It is realised by chemical nut graphs of genus $1$.)
Useful background information exists, in the form of a database of nut graphs of low order 
produced by the computations described in \cite{CoolFowlGoed-2017} and listed on the accompanying website \cite{nutgen-site}.
Information 
available
on the website includes a dataset restricted to chemical nut graphs with $n \le 22$, 
which decides the existence question for small values of $v_3$ and $v_2$. 
Table~\ref{tbl:census} gives the results of our
interrogation of the dataset, listing cases of parameter pairs in the range 
$n \le 22$
where no chemical nut graph exists, and graph counts for the parameter pairs that are realisable by chemical nut 
graphs in the range.

\begin{table}[!ht]
\centering
\begin{small}
\begin{tabular}{|c|rrrrrrrrrrrr|}
\hline
\backslashbox{$v_2$}{$v_3$} & 0    & 2    & 4   & 6    & 8     & 10     & 12     & 14      & 16      & 18      & 20     & 22     \\  \hline
0         & $\emptyset$ & $\emptyset$ & \bf0   & \bf 0   & {\bf 0}     & {\bf 0}      & \bf9      & {\bf 0}       & {\bf 0}       & 5541    & \bf 5      & \bf 71     \\ 
1         & $\emptyset$ & $\emptyset$ & \bf0   & \bf0    & {\bf 0}     & {\bf 0}      & {\bf 0}      & \bf10      & \bf22      & \bf235     & 13602  & $-$    \\ 
2         & $\emptyset$ & \bf 0    & \bf0   &  \bf0    & {\bf 0}     & {\bf 0}      & \bf2      & {\bf 0}       & \bf37      & 3600    & \bf 30760  & $-$    \\ 
3         & 0    & \bf0    & \bf0   & \bf0    & \bf 7     & \bf9      & \bf71     & 5042    & 13474   & 168178  & $-$    & $-$    \\ 
4         & 0    & \bf0    & \bf0   & \bf0    & {\bf 0}     & \bf10     & 225    & \bf388     & 14022   & 480051  & $-$    & $-$    \\ 
5         & 0    & \bf  0    & \bf0   &\bf 0    & 7     & \bf82     & 596    & 16497   & 280798  & $-$     & $-$    & $-$    \\ 
6         & 0    & 0    & \bf0   & \bf0    & \bf4     & 127    & 1186   & 15801   & 545237  & $-$     & $-$    & $-$    \\ 
7         & 0    &\bf  1    & \bf0   & \bf 8    & 212   & 1368   & 23127  & 575614  & $-$     & $-$     & $-$    & $-$    \\ 
8         & 0    & 0    & \bf0   &\bf 5    & 22    & 620    & 12035  & 181009  & $-$     & $-$     & $-$    & $-$    \\ 
9         & 0    & 1    & \bf0   & 36   & 718   & 9603   & 211501 & $-$     & $-$     & $-$     & $-$    & $-$    \\ 
10        & 0    & 0    & \bf 2   & 13   & 176   & 5457   & 106013 & $-$     & $-$     & $-$     & $-$    & $-$    \\ 
11        & 0    & 3    & \bf0   & 189  & 4427  & 60792  & $-$    & $-$     & $-$     & $-$     & $-$    & $-$    \\ 
12        & 0    & 0    & 2   & 50   & 786   & 25535  & $-$    & $-$     & $-$     & $-$     & $-$    & $-$    \\ 
13        & 0    & 3    & \bf0   & 601  & 14153 & $-$    & $-$    & $-$     & $-$     & $-$     & $-$    & $-$    \\ 
14        & 0    & 0    & 11  & 118  & 3415  & $-$    & $-$    & $-$     & $-$     & $-$     & $-$    & $-$    \\ 
15        & 0    & 6    & 0   & 1881 & $-$   & $-$    & $-$    & $-$     & $-$     & $-$     & $-$    & $-$    \\ 
16        & 0    & 0    & 13  & 309  & $-$   & $-$    & $-$    & $-$     & $-$     & $-$     & $-$    & $-$    \\ 
17        & 0    & 6    & 0   & $-$  & $-$   & $-$    & $-$    & $-$     & $-$     & $-$     & $-$    & $-$    \\ 
18        & 0    & 0    & 38  & $-$  & $-$   & $-$    & $-$    & $-$     & $-$     & $-$     & $-$    & $-$    \\ 
19        & 0    & 10   & $-$ & $-$  & $-$   & $-$    & $-$    & $-$     & $-$     & $-$     & $-$    & $-$    \\ 
20        & 0    & 0    & $-$ & $-$  & $-$   & $-$    & $-$    & $-$     & $-$     & $-$     & $-$    & $-$    \\ 
21        & 0    & $-$  & $-$ & $-$  & $-$   & $-$    & $-$    & $-$     & $-$     & $-$     & $-$    & $-$    \\ 
22        & 0    & $-$  & $-$ & $-$  & $-$   & $-$    & $-$    & $-$     & $-$     & $-$     & $-$    & $-$    \\  \hline
\end{tabular}
\end{small}
\caption{Census of chemical nut graphs. 
The table shows the counts of chemical nut graphs for each of the allowed degree signatures 
$(v_3, v_2)$ 
accessible to graphs with $n \le 22$. Symbol - means that the case is outside the range of the dataset. Symbol $\emptyset$
in the table means that no graph with these parameters exists. 
Entries in the table were obtained by filtering the graphs from the nut graph database \cite{hog,nutgen-site,CoolFowlGoed-2017}.
Only the boldface entries are used in the proof of our main result; realisability in all other cases follows from the theory developed in Sections \ref{subsec:ext} to
\ref{subsec:nonrel}.}
\label{tbl:census}
\end{table}

As a chemical nut graph has $v_3$ even, we may consider it to be derived from a 
cubic graph on $v_3$ vertices with some edges arbitrarily subdivided to reach the total count of $v_2 + v_3$.  We will use this fact later
(e.g., in the proof of Theorem \ref{cor:realisx}).
Note that fast generation of chemical graphs was made possible by methods first introduced 
for cubic graphs \cite{hog,cubicpaper}.

\section{Main result}
The main existence result can be stated as follows.

\begin{theorem}
\label{thm:main}
A chemical nut graph with parameters $(v_3, v_2)$, $v_2 \geq 0, v_3 \geq 0$ and $v_3$ even, exists if and only if one of the following statements holds:
\begin{enumerate}[label=(\alph*)]
\item $v_3 = 2$  and  $v_2 = 7 + 2k $, $k \geq 0$\semicolon
\item $v_3 = 4$  and  $v_2 = 10 + 2k$, $k \geq 0$\semicolon
\item $v_3 = 6$  and  $v_2  \geq 7$\semicolon
\item $v_3 \geq 8$ and  
$$(v_3,v_2) \notin \{(8, 0), (8, 1), (8, 2), (8, 4), (10, 0), (10, 1), (10, 2), (12, 1), (14, 0), (14, 2), (16, 0)\}.$$
\end{enumerate}

Moreover, in each case where chemical nut graphs exist, a planar chemical nut graph may be found, except when
$v_3 = 20$ and  $v_2 = 0$, where one of the $5$ chemical nut graphs has genus $1$, and the other $4$ have genus $2$.
\end{theorem}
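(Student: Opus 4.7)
The plan is to split the proof into three parts: non-existence of the excluded pairs, existence by explicit construction, and the planarity addendum.

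For non-existence, observe that every excluded pair in (a)--(d) has order $n=v_3+v_2\le 22$ and therefore lies inside the range of the nut-graph database \cite{CoolFowlGoed-2017,nutgen-site}. The boldface $0$ entries of Table~\ref{tbl:census} certify computationally that no chemical nut graph with those parameters exists. This simultaneously disposes of the lower bounds $v_2\ge 7$ in (a), $v_2\ge 10$ in (b), $v_2\ge 7$ in (c), and of the entire finite exception list in (d). The absence of graphs with $v_3=0$ is immediate, since such a graph would have to be a cycle, which is either bipartite or fails the kernel-eigenvector condition.

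For existence, I would run induction on $n$ with base cases read off from the \textbf{boldface} nonzero entries of Table~\ref{tbl:census}, and inductive steps provided by the local ``Fowler-type'' extension operations developed in Sections~\ref{subsec:ext}--\ref{subsec:nonrel}. The toolkit I expect to use contains an extension fixing $v_3$ and adding $2$ to $v_2$ (typically a double subdivision of a suitable edge), an extension adding $2$ to $v_3$ together with a fixed constant $c$ to $v_2$ (a cubic-vertex-pair insertion), and, where needed, a parity-changing extension adding $1$ to $v_2$. For (a), iterating the $v_2$-extension on the unique base graph at $(2,7)$ produces every $(2,7+2k)$; (b) is analogous starting at $(4,10)$. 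For (c), since both parities of $v_2\ge 7$ are required, one needs base graphs at $(6,7)$ and $(6,8)$ (both boldface in the table) before iterating the $v_2$-extension. For (d), one chooses enough boldface base cases in each even column $v_3\in\{8,10,\dots,22\}$ so that the combined action of the $v_2$- and $v_3$-extensions sweeps through the whole allowed region.

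The main obstacle is precisely the inductive argument for (d): one must ensure that the chosen extensions cover every permitted pair without leaving gaps near the boundary of the exception region -- for instance, reaching $(14,3)$ from a base case without passing through the forbidden $(14,2)$, or bridging between columns $v_3=8$ and $v_3=10$. This will come down to a case analysis in which, for each permitted pair adjacent to the exception list, one exhibits an explicit chain of extensions from a nearby boldface base case; the remark in the caption of Table~\ref{tbl:census} that only the boldface entries are used in the proof tells me that the set of base cases has been minimised precisely for this purpose, and that every non-boldface realisable entry in the table is reachable by a single extension from a smaller one.

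For planarity, the same induction goes through once two ingredients are checked: each extension operation has a planarity-preserving variant (edge subdivisions are automatic, while vertex-pair insertions can be carried out inside a face of an existing planar embedding), and each boldface base case admits at least one planar representative, which is a finite verification from the database. The sole obstruction is $(v_3,v_2)=(20,0)$: here one enumerates the five chemical nut graphs with these parameters and computes their genus directly, confirming that exactly one has genus $1$ and the remaining four have genus $2$, so that although $(20,0)$ is realisable it is the unique realisable pair with no planar representative.
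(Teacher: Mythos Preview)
Your non-existence argument has a genuine gap. You claim that every excluded pair has $n=v_3+v_2\le 22$ and is therefore settled by the database, but this is false: conditions (a) and (b) impose \emph{parity} restrictions, so you must rule out the infinite families $(2,2k)$ for all $k\ge 0$ and $(4,2k+1)$ for all $k\ge 0$. These cannot be handled by a finite computer search. The paper closes this gap with Theorem~\ref{cor:realisx}: any leafless chemical graph with $v_2\ge \tfrac{9}{2}v_3+1$ is a subdivision of a cubic graph on $\tfrac{3}{2}v_3$ edges, so by pigeonhole some edge carries at least four degree-$2$ vertices, and Proposition~\ref{prop:subdivision} then shows that $(v_3,v_2)$ is realisable \emph{if and only if} $(v_3,v_2-4)$ is. This reversibility lets non-existence at $(2,6),(2,8),(4,15),(4,17)$ (all in the database range) propagate upward to kill the two infinite series. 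Your plan contains no mechanism for this.

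Your existence toolkit also diverges from what actually works. A double subdivision of a generic edge does \emph{not} preserve the nut property; the $+2$ step in $v_2$ (Proposition~\ref{prop:4}) is valid only on a bridge, which is why the paper tracks bridged versus bridgeless seeds separately and supplements with a $+4$ subdivision (Proposition~\ref{prop:subdivision}) that works on any edge. There is no known ``$+2$ in $v_3$'' or ``$+1$ in $v_2$'' construction; the paper's cubic step is the Fowler construction, which adds $6$ to $v_3$ and $0$ to $v_2$. Consequently the inductive sweep for (d) requires seeds in three consecutive residue classes of $v_3\pmod 6$ at each small $v_2$, not just one per column --- this is why the seed list in Lemma~14 runs out to $v_3=28$ rather than stopping at $22$.
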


The key to proving this theorem,
and determining all realisable parameter pairs, is 
to use Table~\ref{tbl:census} as a source of seed graphs, and then to apply systematic constructions of larger nut graphs\cite{GPS,Jan}.
The proof of the main result has several
ideas. We identify each of them by a separate claim.

\subsection{Constructions for extending nut graphs}
\label{subsec:ext}

\subsubsection{The bridge construction}

The first construction that we introduce is \emph{the bridge construction} and is applicable only to
graphs with bridges, i.e.\ graphs with edges, whose removal disconnects the graph.
\begin{figure}[!h]
\centering
\begin{subfigure}{0.5\textwidth}
    \centering
    \begin{tikzpicture}
\definecolor{mygreen}{RGB}{205, 238, 231}
\tikzstyle{vertex}=[draw,circle,font=\scriptsize,minimum size=13pt,inner sep=1pt,fill=mygreen]
\tikzstyle{edge}=[draw,thick]
\draw[thick,fill=gray!20!white] (-1.2,0) ellipse (1.2cm and 1cm);
\draw[thick,fill=gray!20!white] (3.2,0) ellipse (1.2cm and 1cm);
\node at (-1.2, 0) {\huge $G_1$};
\node at (3.2, 0) {\huge $G_2$};
\node[vertex,fill=yellow,label={[yshift=0pt,xshift=5pt]90:$a$}] (u) at (0, 0) {$u$};
\node[vertex,fill=yellow,label={[xshift=-5pt,yshift=0pt]90:$b$}] (v) at (2, 0) {$v$};
\path[edge] (u) -- (v);
    \end{tikzpicture}
        \caption{$G$}
        \label{fig1a}
    \end{subfigure}%
\begin{subfigure}{0.5\textwidth}
    \centering
    \begin{tikzpicture}
\definecolor{mygreen}{RGB}{205, 238, 231}
\tikzstyle{vertex}=[draw,circle,font=\scriptsize,minimum size=13pt,inner sep=1pt,fill=mygreen]
\tikzstyle{edge}=[draw,thick]
\draw[thick,fill=gray!20!white] (-1.2,0) ellipse (1.2cm and 1cm);
\draw[thick,fill=gray!20!white] (4.2,0) ellipse (1.2cm and 1cm);
\node at (-1.2, 0) {\huge $-G_1$};
\node at (4.2, 0) {\huge $G_2$};
\node[vertex,fill=yellow,label={[yshift=0pt,xshift=5pt]90:$-a$}] (u) at (0, 0) {$u$};
\node[vertex,fill=yellow,label={[xshift=-5pt,yshift=0pt]90:$b$}] (v) at (3, 0) {$v$};
\node[vertex,fill=mygreen,label={[xshift=0pt,yshift=0pt]90:$-b$}] (x) at (1, 0) {$x$};
\node[vertex,fill=mygreen,label={[xshift=0pt,yshift=0pt]90:$a$}] (y) at (2, 0) {$y$};
\path[edge] (u) -- (x) -- (y) -- (v);
    \end{tikzpicture}
        \caption{$B(G, uv)$}
        \label{fig1b}
    \end{subfigure}%
\caption{The bridge construction. The graph $G$ consists of subgraphs $G_1$ and $G_2$ 
that are joined by an edge $uv$, i.e.\ $G_1$ and $G_2$ are connected components of $G - uv$.
$B(G, uv)$ is the enlargement of a nut graph by insertion of two vertices on a bridge and is also a nut graph.
Values $a$, $b$ and $-a$, $-b$, $a$ and $b$ 
are entries in the unique kernel eigenvectors of the graph
$G$ and its expansion, $B(G, uv)$, respectively.}
\label{fig:bridge}
\end{figure}
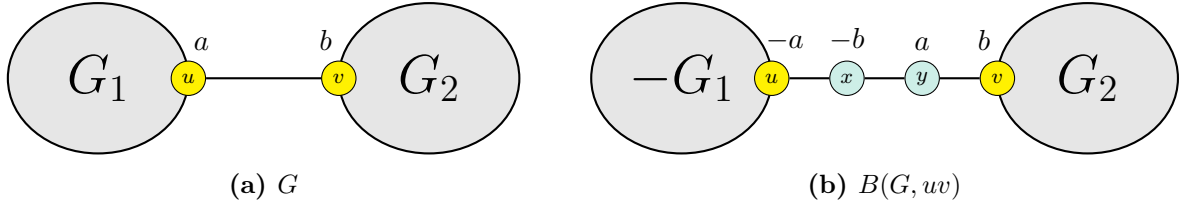
Let $uv$ be a bridge in $G$. By $B(G,uv)$ we denote the graph in which we insert two vertices on the bridge $uv$.

\begin{proposition}
\label{prop:4}
If we insert two vertices on a bridge $uv$ of $G$, the resulting graph $B(G,uv)$ is a nut graph if and only if $G$ is a nut graph.
\end{proposition}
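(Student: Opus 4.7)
The plan is to establish a linear bijection between $\ker A(G)$ and $\ker A(B(G,uv))$ which preserves the property of having no zero entries; combined with the fact that subdividing an edge preserves connectedness, this will immediately yield the biconditional. The key local observation is that for any vector $\mathbf{g}\in\ker A(B(G,uv))$, the eigenvalue equations at the two inserted vertices force $\mathbf{g}(x)=-\mathbf{g}(v)$ and $\mathbf{g}(y)=-\mathbf{g}(u)$, so $\mathbf{g}$ is completely determined by its restriction to $V(G)$.

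For the \emph{only if} direction, given a kernel eigenvector $\mathbf{f}$ of $A(G)$ with $a:=\mathbf{f}(u)$ and $b:=\mathbf{f}(v)$, I would define an extension $\mathbf{g}$ on $V(B(G,uv))$ by negating $\mathbf{f}$ on $V(G_1)$, copying $\mathbf{f}$ on $V(G_2)$, and setting $\mathbf{g}(x)=-b$, $\mathbf{g}(y)=a$, exactly as depicted in Figure~\ref{fig1b}. A vertex-by-vertex check of $A(B(G,uv))\mathbf{g}=\mathbf{0}$ then passes: outside $\{u,x,y,v\}$ the equations are inherited from those for $\mathbf{f}$ (up to a uniform sign on the $G_1$-side); at $u$ the $G_1$-neighbour sum flips to $+b$ and cancels $\mathbf{g}(x)=-b$, and symmetrically at $v$; the equations at $x$ and $y$ read $-a+a=0$ and $-b+b=0$. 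Since $a,b\neq 0$ and sign-flipping preserves non-vanishing, $\mathbf{g}$ has no zero entries.

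For the converse, and to promote the above to a bijection of kernels, I would run the opposite construction: for any $\mathbf{g}\in\ker A(B(G,uv))$, define $\mathbf{f}$ on $V(G)$ by negating $\mathbf{g}|_{V(G_1)}$ and keeping $\mathbf{g}|_{V(G_2)}$. The two forced relations at the inserted vertices are precisely what the eigenvalue equations at $u$ and $v$ of $G$ demand of $\mathbf{f}$, and the remaining equations are inherited unchanged from $\mathbf{g}$, so $\mathbf{f}\in\ker A(G)$. The maps $\mathbf{f}\mapsto\mathbf{g}$ and $\mathbf{g}\mapsto\mathbf{f}$ are mutually inverse linear isomorphisms, so the two nullities agree; the no-zero-entry condition transfers trivially from $\mathbf{g}$ to $\mathbf{f}$ (restriction), and by the explicit formula above in the reverse direction.

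The main bookkeeping obstacle is the need to flip signs on exactly one side ($V(G_1)$, not both or neither) and to place the precise values $-b$ and $a$ on the inserted vertices, so that the neighbour sums at $u$ and $v$ cancel rather than reinforce. The odd length $3$ of the inserted path $u,x,y,v$ is what makes this $\pm$-bookkeeping consistent; a quick algebraic check shows that inserting a single vertex instead would force $a^2=-b^2$, and hence would not preserve the kernel in general.
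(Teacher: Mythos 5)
Your proof is correct and follows essentially the same route as the paper's: negate the kernel entries on the $G_1$ side, keep those on the $G_2$ side, and place $-b$ and $a$ on the inserted vertices $x$ and $y$, with the reverse operation undoing this. You spell out what the paper only sketches (in particular, the explicit mutually inverse linear isomorphisms of kernels, which cleanly justify that the nullity is preserved and not merely that a full kernel vector exists), but the underlying idea is identical.
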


\par\noindent
The forward direction of this result, together
with an alternative proof using linear algebra, 
can be found in Section 4.3 of \cite{ScirihaGutman-NutExt}.

\begin{proof}\ 

\noindent
$(\Rightarrow)$: The process of assigning entries of a new kernel eigenvector on the bridging path $u\,x\,y\,v$ in the graph $B(G, uv)$
from the graph $G$
is unique if we retain all kernel eigenvector entries 
in $G_2$.
(See Figure \ref{fig:bridge}.)

\noindent
$(\Leftarrow)$: If the graph $B(G, uv)$ is a nut graph and the vertices $u$ and $v$
have kernel eigenvector entries
assigned as shown, then the entries for vertices $x$ and $y$ follow.
The reverse operation can be carried out
by switching signs of all kernel eigenvector entries in either $G_1$ or $G_2$. 
Hence, the graph $G$ is a nut graph.
\end{proof}

\subsubsection{The subdivision construction}

\begin{proposition}
\label{prop:subdivision}
If we insert four vertices on an edge $uv$ of $G$, the resulting graph $S(G,uv)$ is a nut graph if and only if $G$ is a nut graph.
\end{proposition}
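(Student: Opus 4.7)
The plan parallels Proposition \ref{prop:4}: subdividing $uv$ four times inserts a path $u, x_1, x_2, x_3, x_4, v$ of length $5$, and I will exhibit a linear bijection between $\ker A_G$ and $\ker A_{S(G,uv)}$ that preserves the no-zero-entry property.

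First I would analyse the kernel equations on the subdivision path. Given prescribed values $a$ at $u$ and $b$ at $v$, the four zero-sum conditions at $x_1, x_2, x_3, x_4$ force the path entries to be $b, -a, -b, a$ in order; crucially, the condition at $x_4$ recovers the prescribed value $b$ at $v$ and the condition at $x_1$ recovers $a$ at $u$, so the endpoint values are not constrained by the internal equations. Since the neighbour $x_1$ contributes the same value $b$ to the kernel equation at $u$ as the former neighbour $v$ did in $G$, and symmetrically $x_4$ contributes $a$ at $v$, the equations of $A_{S(G,uv)}$ at $u$ and $v$ are \emph{identical} to those of $A_G$. Thus the maps $w \mapsto w|_{V(G)}$ and $w \mapsto (w, b, -a, -b, a)$ are mutually inverse linear bijections between the two kernels.

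The remaining work is bookkeeping. Nullities coincide, so $G$ has nullity $1$ iff $S(G,uv)$ does. The lift preserves non-vanishing, since the added entries $\pm a, \pm b$ vanish only when $a$ or $b$ does; restriction preserves non-vanishing trivially. Connectedness and simplicity are preserved under subdivision, so $S(G,uv)$ is a nut graph iff $G$ is. The only genuinely calculational step is the path analysis; the underlying reason it works is that a kernel eigenvector on an infinite path is periodic with period $4$, so after four insertions the prescribed endpoint values recur \emph{without} the sign flip that forced Proposition \ref{prop:4} to assume a bridge.
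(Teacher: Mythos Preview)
Your proof is correct and follows the same approach as the paper: both arguments extend and restrict the kernel eigenvector along the subdivided path using the forced values $b,-a,-b,a$ (the paper records this in Figure~\ref{fig:subdiv4}). Your version is more explicit about why nullity is preserved, framing it as a linear bijection between kernels rather than leaving it implicit, and the period-$4$ remark is a helpful explanation of why no bridge hypothesis is needed here.
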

\par\noindent
The forward direction of this result, together
with an alternative proof using linear algebra, 
can be found as Lemma 4.1 in \cite{ScirihaGutman-NutExt}.

\begin{proof}\ 

\noindent $(\Rightarrow)$: The process of assigning entries of a new kernel eigenvector for the four inserted vertices $w\,x\,y\,z$ is
unique if we retain all eigenvector entries in $G$. (See Figure~\ref{fig:subdiv4}.)

\noindent $(\Leftarrow)$: If the enlarged graph $S(G, uv)$ is a nut graph,
then after removing the four additional vertices and joining $u$ directly to $v$,
retaining all other eigenvector entries, the reduced graph $G$ is also a nut graph.
\end{proof}

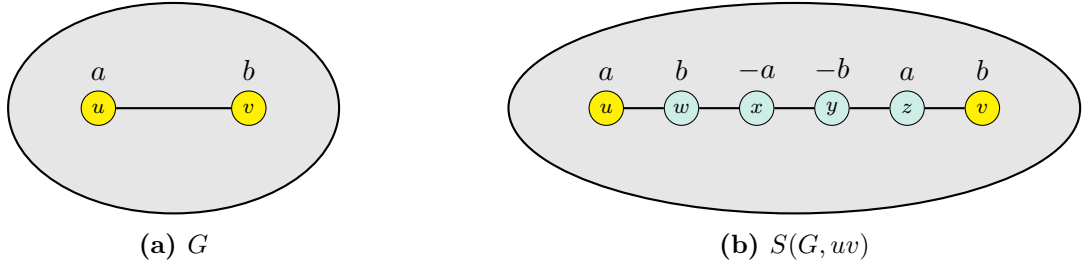
\begin{figure}[!htbp]
\centering
\begin{subfigure}{0.5\textwidth}
    \centering
    \begin{tikzpicture}
\definecolor{mygreen}{RGB}{205, 238, 231}
\tikzstyle{vertex}=[draw,circle,font=\scriptsize,minimum size=13pt,inner sep=1pt,fill=mygreen]
\tikzstyle{edge}=[draw,thick]
\draw[thick,fill=gray!20!white] (1,0) ellipse (2.2cm and 1.4cm);
\node[vertex,fill=yellow,label={[yshift=0pt,xshift=0pt]90:$a$}] (u) at (0, 0) {$u$};
\node[vertex,fill=yellow,label={[xshift=0pt,yshift=0pt]90:$b$}] (v) at (2, 0) {$v$};
\path[edge] (u) -- (v);
    \end{tikzpicture}
        \caption{$G$}
        \label{fig2a}
    \end{subfigure}%
\begin{subfigure}{0.5\textwidth}
    \centering
    \begin{tikzpicture}
\definecolor{mygreen}{RGB}{205, 238, 231}
\tikzstyle{vertex}=[draw,circle,font=\scriptsize,minimum size=13pt,inner sep=1pt,fill=mygreen]
\tikzstyle{edge}=[draw,thick]
\draw[thick,fill=gray!20!white] (2.5,0) ellipse (3.8cm and 1.4cm);
\node[vertex,fill=yellow,label={[yshift=0pt,xshift=0pt]90:$a$}] (u) at (0, 0) {$u$};
\node[vertex,fill=yellow,label={[xshift=0pt,yshift=0pt]90:$b$}] (v) at (5, 0) {$v$};
\node[vertex,fill=mygreen,label={[xshift=0pt,yshift=0pt]90:$b$}] (w) at (1, 0) {$w$};
\node[vertex,fill=mygreen,label={[xshift=0pt,yshift=0pt]90:$-a$}] (x) at (2, 0) {$x$};
\node[vertex,fill=mygreen,label={[xshift=0pt,yshift=0pt]90:$-b$}] (y) at (3, 0) {$y$};
\node[vertex,fill=mygreen,label={[xshift=0pt,yshift=0pt]90:$a$}] (z) at (4, 0) {$z$};
\path[edge] (u) -- (w) -- (x) -- (y) -- (z) -- (v);
    \end{tikzpicture}
        \caption{$S(G, uv)$}
        \label{fig2b}
    \end{subfigure}%

\caption{The subdivision construction. 
The graph
$S(G, uv)$ is obtained by
inserting vertices $w, x, y, z$ into the edge $uv$ of graph $G$.
Values $a$, $b$, $-a$, $-b$
are entries in the unique kernel eigenvectors of the graph
$G$ and its subdivision expansion, $S(G, uv)$.
}
\label{fig:subdiv4}
\end{figure}

\subsubsection{The \lq Fowler construction\rq }

The third construction is applicable to any graph $G$ and 
any of its vertices $v$ with degree $d$, $d > 1$.
Let $G$ be a graph and let $v$ be a vertex of degree $d$ 
in $G$. Let the neighbourhood of $v$ be
$N_G(v) = \{u_1,u_2,\ldots, u_d\}$.
We remove the $d$ edges incident on $v$,
add $2d$ vertices, and connect them to the rest of the graph as shown in Figure~\ref{fig:3}.
Let $F(G,v)$ denote the resulting graph, which has been 
called the \emph{Fowler construction} on $G$ \cite{GPS}.
The case where $d = 3$ was already described in \cite{Sc2008}.

Let the kernel eigenvector $\x$ be assigned to the vertices of $G$ in such a way that $\x(v) = x, \x(u_1) = x_1, \x(u_2) = x_2, \ldots , \x(u_d) = x_d$.
Then there is a unique way to carry over this kernel vector to $F(G,v)$. Moreover, we have the following theorem, 
proved by Gauci et al.\ in \cite{GPS}.

\begin{theorem}[\cite{GPS}]
\label{thm:6}
$G$ is a nut graph if and only if $F(G,v)$ is a nut graph.
\end{theorem}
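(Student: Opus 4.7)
The strategy is to set up a linear bijection between $\ker A(G)$ and $\ker A(F(G,v))$ that sends nowhere-zero vectors to nowhere-zero vectors. Given such a bijection, the nullities of the two adjacency matrices coincide, and $G$ has a nowhere-zero kernel eigenvector if and only if $F(G,v)$ does; the theorem then follows.

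For the ``$\Rightarrow$'' direction I would take the unique nowhere-zero kernel eigenvector $\x$ of $A(G)$ with $\x(v) = x$ and $\x(u_i) = x_i$, and define an extension $\tilde\x$ on $F(G,v)$ by retaining $\x$ on the original vertices and solving the eigenvalue equations locally on the $2d$ new gadget vertices. As noted just before the theorem statement, this can be done in a unique way. I would then verify three things by inspection of the gadget, in the same spirit as the arguments behind Figures~\ref{fig:bridge} and~\ref{fig:subdiv4}: (i) the eigenvalue equation at each $u_i$ is unchanged, because the contribution of the deleted edge $vu_i$ is replaced by an equal contribution from the new gadget neighbour of $u_i$; (ii) the eigenvalue equation at $v$ in $F(G,v)$ reduces to the original equation $\sum_i x_i = 0$ at $v$ in $G$; (iii) the gadget entries, being explicit linear combinations of $x$ and the $x_i$, are all nonzero. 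Denoting this extension by $\phi$, we obtain a well-defined injective linear map $\phi\colon\ker A(G)\to\ker A(F(G,v))$, with injectivity immediate since $\tilde\x|_{V(G)} = \x$.

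For the ``$\Leftarrow$'' direction and the matching upper bound on $\dim\ker A(F(G,v))$, let $\tilde\x$ be any kernel eigenvector of $A(F(G,v))$ and set $\x := \tilde\x|_{V(G)}$. The eigenvalue equations at the $2d$ gadget vertices form a square linear system in the gadget entries with the boundary values $x$ and the $x_i$ as parameters. I would show, by a direct computation on the fixed local gadget, that this system has a unique solution, so that the gadget entries are forced to equal the formulas used to build $\phi$. Substituting those forced values into the eigenvalue equations at $v$ and at each $u_i$ then yields $A(G)\x = 0$, so the restriction map $\rho\colon\ker A(F(G,v))\to\ker A(G)$ is well-defined and linear, and by construction $\rho\circ\phi=\mathrm{id}$ and $\phi\circ\rho=\mathrm{id}$. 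The ``no zero entries'' property is now manifestly transferred in both directions, since the gadget entries are the prescribed nonzero linear combinations of the $x_i$ and $x$, while the reverse implication requires only that the restriction of a nowhere-zero vector to a subset of vertices is again nowhere zero.

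The one step that needs genuine work is the nonsingularity of the $2d \times 2d$ local system in the reverse direction: this is what precludes the existence of kernel vectors of $A(F(G,v))$ supported inside the gadget that do not come from $\ker A(G)$, and is the reason the nullity cannot jump. For $d = 3$ this reduces to a small explicit calculation already carried out in \cite{Sc2008}, and the general case was handled in \cite{GPS}; I would quote or adapt their argument and fold it into the bijection $\phi \leftrightarrow \rho$ described above.
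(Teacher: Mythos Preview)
The paper does not give its own proof of Theorem~\ref{thm:6}; it is quoted from \cite{GPS}, with only the eigenvector transfer recorded in Figure~\ref{fig:3} and the surrounding text. Your outline is precisely the argument one would reconstruct from that figure, and is essentially the approach of \cite{GPS}.

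There is, however, one concrete slip. You write that $\tilde\x$ is obtained by ``retaining $\x$ on the original vertices'' and then set $\rho := \tilde\x\mapsto\tilde\x|_{V(G)}$ with $\rho\circ\phi=\mathrm{id}$. But Figure~\ref{fig:3} shows that the entry at $v$ changes from $x$ in $G$ to $(1-d)x$ in $F(G,v)$; only the entries at $V(G)\setminus\{v\}$ are retained. With your definitions, $(\rho\circ\phi)(\x)(v)=(1-d)x\neq x$, so the claimed identity fails as stated. The fix is trivial (rescale at $v$ by $(1-d)^{-1}$, which is legitimate since $d>1$), but your items (i)--(iii) and the ``no zero entries'' check must be phrased accordingly: the value $(1-d)x$ at $v$ is nonzero precisely because $d>1$ and $x\neq 0$. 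Once this is corrected, your bijection argument goes through, and the nonsingularity of the $2d\times 2d$ local system is indeed the substantive point you correctly defer to \cite{GPS}.
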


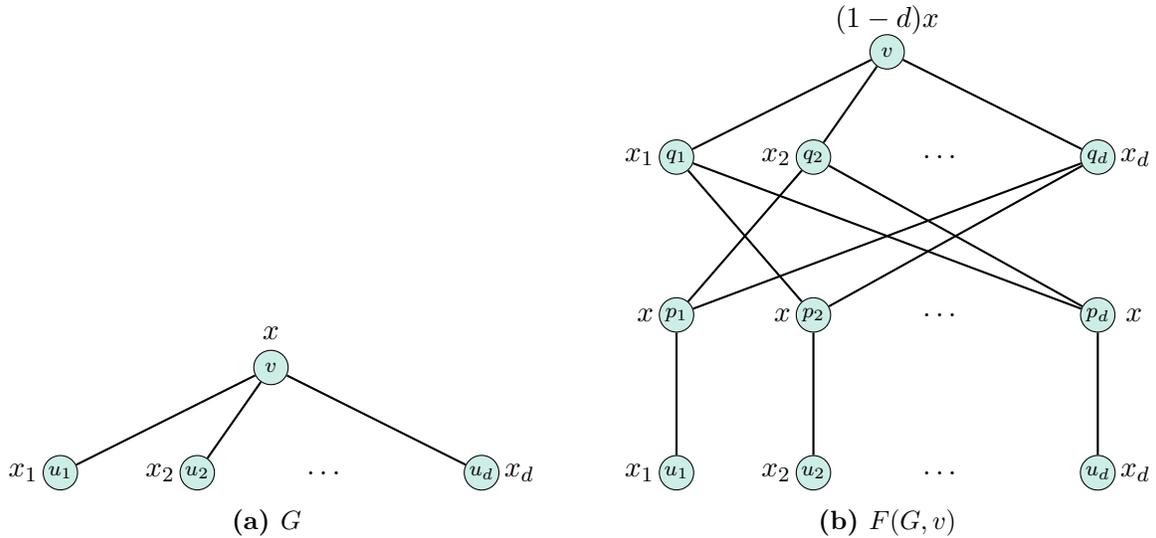
\begin{figure}[!h]
\centering
\subcaptionbox{$G$} 
[.5\linewidth]{
\begin{tikzpicture}[scale=1.4]
\definecolor{mygreen}{RGB}{205, 238, 231}
\tikzstyle{vertex}=[draw,circle,font=\scriptsize,minimum size=13pt,inner sep=1pt,fill=mygreen]
\tikzstyle{edge}=[draw,thick]
\node[vertex,fill=mygreen,label={[yshift=0pt,xshift=0pt]90:$x$}] (v) at (0, 0) {$v$};
\node[vertex,fill=mygreen,label={[xshift=2pt,yshift=0pt]180:$x_1$}] (u1) at (-2, -1) {$u_1$};
\node[vertex,fill=mygreen,label={[xshift=2pt,yshift=0pt]180:$x_2$}] (u2) at (-0.7, -1) {$u_2$};
\node[vertex,fill=mygreen,label={[xshift=-2pt,yshift=0pt]0:$x_d$}] (ud) at (2, -1) {$u_d$};
\node at (0.5, -1) {$\ldots$};
\path[edge] (v) -- (u1);
\path[edge] (v) -- (u2);
\path[edge] (v) -- (ud);
\end{tikzpicture}
}%
\subcaptionbox{$F(G, v)$}
[.5\linewidth]{\begin{tikzpicture}[scale=1.4]
\definecolor{mygreen}{RGB}{205, 238, 231}
\tikzstyle{vertex}=[draw,circle,font=\scriptsize,minimum size=13pt,inner sep=1pt,fill=mygreen]
\tikzstyle{edge}=[draw,thick]
\node[vertex,fill=mygreen,label={[yshift=-4pt,xshift=0pt]90:$(1-d)x$}] (v) at (0, 0) {$v$};
\node[vertex,fill=mygreen,label={[xshift=2pt,yshift=0pt]180:$x_1$}] (q1) at (-2, -1) {$q_1$};
\node[vertex,fill=mygreen,label={[xshift=2pt,yshift=0pt]180:$x_2$}] (q2) at (-0.7, -1) {$q_2$};
\node[vertex,fill=mygreen,label={[xshift=-2pt,yshift=0pt]0:$x_d$}] (qd) at (2, -1) {$q_d$};
\node at (0.5, -1) {$\ldots$};
\node[vertex,fill=mygreen,label={[xshift=2pt,yshift=0pt]180:$x$}] (p1) at (-2, -2.5) {$p_1$};
\node[vertex,fill=mygreen,label={[xshift=2pt,yshift=0pt]180:$x$}] (p2) at (-0.7, -2.5) {$p_2$};
\node[vertex,fill=mygreen,label={[xshift=0pt,yshift=0pt]0:$x$}] (pd) at (2, -2.5) {$p_d$};
\node at (0.5, -2.5) {$\ldots$};
\node[vertex,fill=mygreen,label={[xshift=2pt,yshift=0pt]180:$x_1$}] (u1) at (-2, -4) {$u_1$};
\node[vertex,fill=mygreen,label={[xshift=2pt,yshift=0pt]180:$x_2$}] (u2) at (-0.7, -4) {$u_2$};
\node[vertex,fill=mygreen,label={[xshift=-2pt,yshift=0pt]0:$x_d$}] (ud) at (2, -4) {$u_d$};
\node at (0.5, -4) {$\ldots$};
\path[edge] (v) -- (q1);
\path[edge] (v) -- (q2);
\path[edge] (v) -- (qd);
\path[edge] (p1) -- (q2); \path[edge] (p1) -- (qd);
\path[edge] (p2) -- (q1); \path[edge] (p2) -- (qd);
\path[edge] (pd) -- (q1); \path[edge] (pd) -- (q2);
\path[edge] (p1) -- (u1); 
\path[edge] (p2) -- (u2); 
\path[edge] (pd) -- (ud);
\end{tikzpicture}}
\caption{A construction for expansion of a nut graph $G$ about vertex $v$ of degree $d$, to give $F(G, v)$.
The labelling of vertices in $G$ and $F(G, v)$ is shown within the circles that represent vertices.
Shown beside each vertex is the corresponding entry of the unique kernel eigenvector of the respective graph.}
\label{fig:3}
\end{figure}

The construction 
$F(G,v)$
converts a nut graph that contains a vertex of given  
degree $d$ to a nut graph with $2d$ more vertices of that degree. It has been described in the literature 
for the case of general $d$, but here
the interesting cases are for degrees $2$ and $3$. 

If $v$ is a vertex of degree $2$ then it belongs to a path of length at least $2$. 
If we apply $F$ to this vertex, the length of the path increases by $4$.
If instead $v$ is a vertex of degree $3$, then the construction replaces it, and its neighbourhood, by the subgraph depicted in Figure~\ref{fig:4}.

\begin{figure}[!htb]
\centering
\begin{subfigure}{0.5\textwidth}
\centering
\begin{tikzpicture}
\definecolor{mygreen}{RGB}{205, 238, 231}
\tikzstyle{vertex}=[draw,circle,font=\scriptsize,minimum size=13pt,inner sep=1pt,fill=mygreen]
\tikzstyle{edge}=[draw,thick]
\node[vertex,fill=mygreen] (v) at (0, 0) {$v$};
\node[vertex,fill=mygreen] (u3) at (60:2.5) {$u_3$};
\node[vertex,fill=mygreen] (u2) at (-60:2.5) {$u_2$};
\node[vertex,fill=mygreen] (u1) at (180:2.5) {$u_1$};
\path[edge] (v) -- (u1);
\path[edge] (v) -- (u2);
\path[edge] (v) -- (u3);
\end{tikzpicture}
\caption{$G$}
\end{subfigure}%
\begin{subfigure}{0.5\textwidth}
\centering
\begin{tikzpicture}
\definecolor{mygreen}{RGB}{205, 238, 231}
\tikzstyle{vertex}=[draw,circle,font=\scriptsize,minimum size=13pt,inner sep=1pt,fill=mygreen]
\tikzstyle{edge}=[draw,thick]
\node[vertex,fill=mygreen] (v) at (0, 0) {$v$};
\node[vertex,fill=mygreen] (u3) at (60:2.5) {$u_3$};
\node[vertex,fill=mygreen] (u2) at (-60:2.5) {$u_2$};
\node[vertex,fill=mygreen] (u1) at (180:2.5) {$u_1$};
\node[vertex,fill=yellow] (y3) at (60:1.25) {};
\node[vertex,fill=yellow] (y2) at (-60:1.25) {};
\node[vertex,fill=yellow] (y1) at (180:1.25) {};
\node[vertex,fill=yellow] (t3) at (120:1.25) {};
\node[vertex,fill=yellow] (t2) at (0:1.25) {};
\node[vertex,fill=yellow] (t1) at (-120:1.25) {};
\path[edge] (y1) -- (u1);
\path[edge] (y2) -- (u2);
\path[edge] (y3) -- (u3);
\path[edge] (t1) -- (v);
\path[edge] (t2) -- (v);
\path[edge] (t3) -- (v);
\path[edge] (y1) -- (t1) -- (y2) -- (t2) -- (y3) -- (t3) -- (y1);
\end{tikzpicture}
\caption{$F(G, v)$}
\end{subfigure}
\caption{Construction $F(G,v)$ replaces cubic vertex $v$ and its neighbourhood $N_G(v)$ with the $10$-vertex subgraph shown on the right.}
\label{fig:4}
\end{figure}
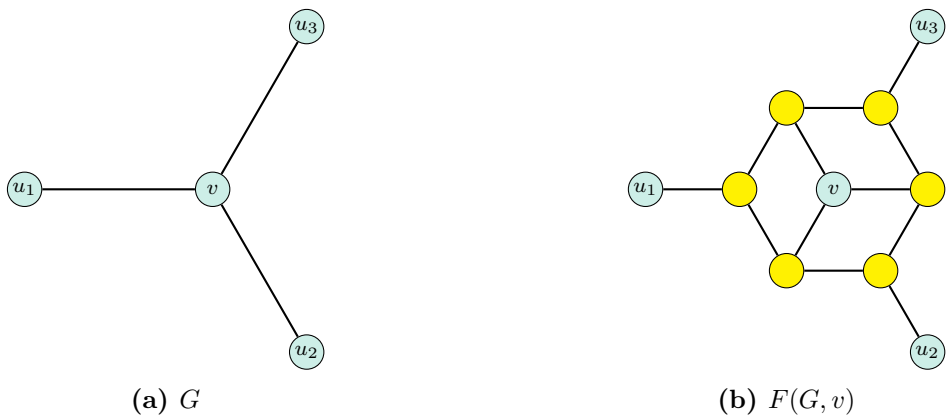
 
The process of extending the nut graph can be described as follows: take a nut graph $G$ that has a vertex of degree $d$. 
The non-trivial kernel eigenvector of the adjacency matrix of $G$ has entries $x$ on the vertex of interest 
and entries $u_i$ ($\sum_i u_i = 0$) on its neighbours. It is possible to construct a larger nut graph 
on $n+2d$ vertices with a full kernel eigenvector and the same nullity as $G$: 
the construction involves interleaving two layers of $d$ vertices internally connected as a cocktail-party graph. 
Untangling the cocktail-party portion of the graph shows that the larger graph inherits the genus of $G$, 
when the vertex at which expansion takes place is either
of degree $2$ or degree $3$. The following proposition explains this more precisely.
\begin{proposition}
If a chemical graph $G$ is embedded in some closed surface $\Sigma$, then any bridge construction $B(G,uv)$, 
subdivision construction $S(G, uv)$, 
 or Fowler construction $F(G,v)$ can be embedded
in the same surface $\Sigma$. In particular, $G$ is planar if and only if each of $B(G,uv)$, $S(G,uv)$ and $F(G,v)$ is planar.
Moreover, 
if any one of the graphs $G$,  $B(G,uv)$, $S(G,uv)$ or $F(G,v)$ is planar then all of them are planar.
\end{proposition}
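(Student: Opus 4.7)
The plan is to handle each of the three constructions by exhibiting it as a purely local topological modification of an embedding, reversible by an equally local operation. Since $G$ is a chemical graph, every vertex has degree at most $3$, so in particular the Fowler construction involves $d = \deg(v) \in \{2,3\}$, which keeps the list of cases short.

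For $B(G,uv)$ and $S(G,uv)$, the construction is merely the subdivision of the edge $uv$ by two or four extra degree-$2$ vertices. Any embedding of $G$ in $\Sigma$ extends to such a subdivision by inserting the new vertices as interior points on the arc representing $uv$, and conversely any embedding of the subdivided graph restricts to one of $G$ upon suppressing these degree-$2$ vertices. Hence $G$, $B(G,uv)$ and $S(G,uv)$ embed in exactly the same closed surfaces.

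The Fowler construction is the main case. For $d = 2$, the two edges incident to $v$ form a path $u_1\,v\,u_2$ in $G$, which becomes the path $u_1\,p_1\,q_2\,v\,q_1\,p_2\,u_2$ in $F(G,v)$; thus $F(G,v)$ is isomorphic to a subdivision of $G$ and the previous paragraph applies. For $d = 3$, let $\Gamma$ denote the $10$-vertex gadget on $\{v, q_1, q_2, q_3, p_1, p_2, p_3, u_1, u_2, u_3\}$ depicted in Figure~\ref{fig:4}(b). This graph is planar, as shown by the indicated drawing with the hexagonal cycle $p_1\,q_2\,p_3\,q_1\,p_2\,q_3$ surrounding $v$ and each $u_i$ attached externally to $p_i$. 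Moreover $\Gamma$ admits the full symmetric group $S_3$ as an automorphism group acting on the indices: a permutation $\sigma$ of $\{1,2,3\}$ induces the automorphism $p_i \mapsto p_{\sigma(i)}$, $q_i \mapsto q_{\sigma(i)}$, $u_i \mapsto u_{\sigma(i)}$, fixing $v$. Hence $\Gamma$ can be drawn in a closed disk with $u_1, u_2, u_3$ appearing on the boundary in any prescribed cyclic order. I would then carry out the surgery as follows: given an embedding of $G$ in $\Sigma$, pick a small closed disk $D_v$ around $v$ containing initial segments of the three incident edges, note the cyclic order in which those edges meet $\partial D_v$, excise the interior of $D_v$, and paste in a planar drawing of $\Gamma$ whose external legs match that cyclic order. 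The result is an embedding of $F(G,v)$ in $\Sigma$.

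For the reverse direction, given any embedding of $F(G,v)$ in $\Sigma$, contracting the $2d$ edges $v q_i$ and $p_i u_i$ yields an embedded graph isomorphic to $G$, since contracting an edge in an embedded graph always gives an embedding of the contracted graph in the same surface. The three equivalences together yield the ``if and only if'' for planarity and in particular show that if any one of $G$, $B(G,uv)$, $S(G,uv)$, $F(G,v)$ is planar then all are. The hardest point of the argument is the cyclic-order matching in the $d = 3$ Fowler surgery, and that is exactly what the $S_3$-symmetry of $\Gamma$ resolves.
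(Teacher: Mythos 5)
Your argument is correct, and it is in essence the argument the paper intends but never writes down: the proposition is stated without proof, supported only by the one-line remark that ``untangling the cocktail-party portion of the graph shows that the larger graph inherits the genus of $G$'' for expansion vertices of degree $2$ or $3$. Your treatment of $B$ and $S$ as edge subdivisions, of the $d=2$ Fowler case as a subdivision in disguise, and of the $d=3$ case as a disk surgery replacing a neighbourhood of $v$ by a planar drawing of the $10$-vertex gadget with its three legs on the boundary is exactly the missing content, and the $S_3$-symmetry (or simply a reflection of the disk, since three boundary points admit only two cyclic orders) disposes of the leg-matching issue. One small imprecision in the reverse direction: contracting the edges $vq_i$ and $p_iu_i$ in $F(G,v)$ does not yield a graph isomorphic to $G$ but rather $G$ with each edge $vu_i$ doubled, since the two edges $p_iq_j$, $j\neq i$, both become parallel $v$--$u_i$ edges; this is harmless (delete one edge from each parallel pair, or simply observe that $G$ is a minor of $F(G,v)$ and embeddability in a fixed surface is minor-closed), but the statement as written should be adjusted.
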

\par\noindent
It is not hard to see, for instance, by Menger's Theorem, that $G$ has the same connectivity as $F(G,v)$. This implies the following proposition.

\begin{proposition}
Let $G$ be a chemical graph.
$G$ is polyhedral, i.e.\ planar and $3$-connected, if and only if for any vertex $v$ of $G$ the graph $F(G,v)$ is polyhedral.
\end{proposition}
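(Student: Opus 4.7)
The plan is to reduce the claim to preservation of two properties separately: planarity and $3$-connectivity. The preceding proposition handles planarity, so fixing an arbitrary vertex $v \in V(G)$, it suffices to prove $\kappa(G) \geq 3$ iff $\kappa(F(G,v)) \geq 3$; because $v$ is arbitrary, the quantifier ``for any vertex $v$'' in the statement follows.

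The connectivity equivalence I would prove via Menger's theorem. The gadget introduced at $v$ interfaces with the remainder of the graph only through the $d = \deg_G(v)$ attachment vertices $u_1, \dots, u_d$; for polyhedrality we need $d = 3$, and a direct inspection of Figure~\ref{fig:4} shows that for every pair $i \neq j$ the gadget contains an internal $u_i$--$u_j$ path such as $u_i - p_i - q_k - p_j - u_j$ (with $k \notin \{i,j\}$) whose interior lies entirely inside the gadget. Forward direction: assuming $G$ is $3$-connected, given $a, b \in V(F(G,v))$, I take three internally disjoint $a$--$b$ paths in $G$ (after first routing out any endpoint lying inside the gadget to an attachment vertex using the gadget's internal structure); at most one of them uses $v$, and that path is lifted to $F(G,v)$ by replacing $u_i - v - u_j$ with one of the internal routes above, keeping the other two paths unchanged. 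Backward direction: assuming $F(G,v)$ is $3$-connected, given $a, b \in V(G) \setminus \{v\}$, I take three internally disjoint $a$--$b$ paths in $F(G,v)$; each either avoids the gadget or enters and leaves through two distinct attachment vertices. Since only three attachments are available and they must appear as distinct internal vertices across the three paths, at most one path can traverse the gadget, so collapsing the gadget back to the star at $v$ produces three internally disjoint $a$--$b$ paths in $G$.

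The main obstacle is the bookkeeping for endpoints lying inside the gadget and for the degenerate cases $d \leq 2$. The latter are handled immediately because any vertex of degree $\leq 2$ in $G$ either persists or generates degree-$2$ vertices in $F(G,v)$, so both graphs fail $3$-connectivity together. The former are routine once the internal connectivity of the seven-vertex gadget is tabulated: each gadget vertex admits three internally disjoint paths to the attachment set $\{u_1, u_2, u_3\}$, which can be spliced with paths in $G$. Combined with the planarity equivalence from the preceding proposition, this establishes the biconditional.
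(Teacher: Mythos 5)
Your proposal is correct and follows essentially the same route as the paper, which likewise combines the preceding embedding proposition (for planarity) with the observation that $F(G,v)$ preserves connectivity by Menger's theorem; the paper in fact gives no more detail than that one-line remark, so your fleshed-out path-routing argument is, if anything, more complete. (One small point to fold into your ``bookkeeping'': in the backward direction you should also dispose of vertex pairs involving $v$ itself, e.g.\ by noting that any cutset of size at most $2$ avoiding such a pair would have to contain all of $N_G(v)$.)
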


In passing we note that if we generalise the Fowler construction in a natural way to multigraphs, then, for instance, the cube graph can be obtained as $F(G,v)$ where $G$ is the Theta graph.
A further observation relates to the logical connection between the constructed graph $F(G,v)$ and the subdivision
construction $S(G, uv)$.

\subsection{Realisability results}

The preceding section established tools that allow us to
draw the following three conclusions.

\begin{lemma}
\label{lem:9}
If $(v_3,v_2)$ is realisable by a graph with a bridge, then $(v_3,v_2+2)$ is also realisable (in the same surface) by such graph.
\end{lemma}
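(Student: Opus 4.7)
The plan is a direct application of the bridge construction together with the embedding proposition, with a small amount of bookkeeping on degrees and on the persistence of a bridge.

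First I would start with a chemical nut graph $G$ realising $(v_3,v_2)$ that contains a bridge $uv$, and consider $B(G,uv)$, the graph obtained by inserting two new vertices $x,y$ on the edge $uv$ (so the path $u\,x\,y\,v$ replaces the edge $uv$). By Proposition~\ref{prop:4}, $B(G,uv)$ is a nut graph. The inserted vertices $x$ and $y$ both have degree $2$, and every other vertex of $B(G,uv)$ has the same degree as in $G$; in particular $B(G,uv)$ is simple, connected, and of maximum degree $3$, so it is a chemical graph. Counting degrees gives parameters $(v_3,v_2+2)$, as required.

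Next I would verify that $B(G,uv)$ still has a bridge. This is immediate: the edges $ux$, $xy$, and $yv$ are each bridges of $B(G,uv)$, since removing any of them disconnects one component of $G-uv$ from the other. So the construction can in principle be iterated and, more importantly, the hypothesis of the lemma is preserved.

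Finally, for the statement about the surface, I would invoke the embedding proposition established in Subsection~\ref{subsec:ext} (the one asserting that $B(G,uv)$, $S(G,uv)$ and $F(G,v)$ can all be embedded in any closed surface in which $G$ embeds). Thus if $G$ embeds in $\Sigma$, so does $B(G,uv)$, and the new realisation of $(v_3,v_2+2)$ lives in the same surface.

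No real obstacle arises: the whole argument is essentially a one-line consequence of Proposition~\ref{prop:4} and the embedding proposition, the only verifications being the degree count and the persistence of a bridge, both of which are immediate from inspection of Figure~\ref{fig:bridge}.
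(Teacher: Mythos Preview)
Your proof is correct and follows exactly the approach the paper takes: the paper simply states that Lemma~\ref{lem:9} follows from Proposition~\ref{prop:4}, and you have spelled out the straightforward bookkeeping (degree count, persistence of a bridge, and invocation of the embedding proposition) that makes this implication explicit.
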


Lemma \ref{lem:9} follows from Proposition~\ref{prop:4}.

\begin{lemma}
\label{lem:10}
If $(v_3,v_2)$ is realisable, then $(v_3,v_2+4)$ is also realisable (in the same surface).
\end{lemma}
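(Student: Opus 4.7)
The plan is to apply the subdivision construction (Proposition~\ref{prop:subdivision}) directly. Starting from a chemical nut graph $G$ realising $(v_3,v_2)$ and embedded in some surface $\Sigma$, I would pick an arbitrary edge $uv$ of $G$ (which exists because any chemical nut graph has $v_3 > 0$, hence edges incident with degree-$3$ vertices) and form $S(G,uv)$ by inserting four new vertices $w,x,y,z$ along $uv$.

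The verification then splits into three routine checks. First, $S(G,uv)$ is still chemical: the four inserted vertices all have degree $2$, and the degrees of $u$ and $v$ (and of every other original vertex) are unchanged, so the maximum degree remains $\le 3$. Second, the degree signature becomes $(v_3, v_2+4)$ because $v_3$ is preserved and exactly four new degree-$2$ vertices have been added. Third, $S(G,uv)$ is a nut graph by the forward direction of Proposition~\ref{prop:subdivision}. Finally, the embedding proposition of Section~\ref{subsec:ext} guarantees that $S(G,uv)$ embeds in the same surface $\Sigma$, so the ``same surface'' clause of the lemma is automatic.

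There is essentially no obstacle; the entire content of Lemma~\ref{lem:10} is a direct packaging of Proposition~\ref{prop:subdivision} together with the embedding proposition, once one observes that subdivision only adds degree-$2$ vertices and therefore preserves chemical-ness and shifts $v_2$ by exactly $4$ while leaving $v_3$ fixed.
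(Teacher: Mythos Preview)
Your proposal is correct and follows essentially the same approach as the paper, which simply states that Lemma~\ref{lem:10} ``can be established from Proposition~\ref{prop:subdivision}.'' You have merely spelled out the routine verifications (chemical-ness, degree count, nut property, same surface) that the paper leaves implicit.
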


Lemma \ref{lem:10} can be established from Proposition~\ref{prop:subdivision}.

\begin{lemma}
\label{lem:11}
If $(v_3,v_2)$ is realisable, then $(v_3+6,v_2)$ is also realisable (in the same surface). Moreover if the former graph has a bridge, the latter also has a bridge.
\end{lemma}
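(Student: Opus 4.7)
The plan is to apply the Fowler construction at a degree-$3$ vertex. Since every chemical nut graph satisfies $v_3 > 0$, a graph $G$ realising $(v_3, v_2)$ contains a cubic vertex $v$. Specialising Figure~\ref{fig:3} to $d = 3$ (see also Figure~\ref{fig:4}), the construction $F(G, v)$ introduces six new vertices $p_1, p_2, p_3, q_1, q_2, q_3$, each of them cubic in the resulting graph, preserves $v$ at degree $3$, and restores each former neighbour $u_i$ to its original degree via the new edge $p_i u_i$. No other vertex changes its degree. Consequently $F(G, v)$ is a chemical graph with parameters $(v_3 + 6, v_2)$; by Theorem~\ref{thm:6} it is a nut graph, and by the embedding proposition stated after Theorem~\ref{thm:6} it can be drawn on the same surface as $G$.

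For the ``moreover'' clause, suppose $G$ has a bridge $e$. We choose the cubic vertex $v$ judiciously so that $F(G, v)$ inherits a bridge. If some cubic vertex of $G$ is not incident with $e$, we use it as $v$; the surgery leaves $e$ untouched, and removing $e$ still disconnects $F(G, v)$ into the two components of $G - e$, with the side containing $v$ merely locally modified. Otherwise every cubic vertex of $G$ lies on $e$, which is possible only when $v_3 = 2$ and $e$ joins the two cubic vertices. In that subcase take $v$ to be an endpoint of $e$, say $e = v u_1$. After the construction, $u_1$ has exactly one new neighbour, namely $p_1$, and is no longer adjacent to $v$; hence every path from $u_1$ to the remainder of $F(G, v)$ must pass through the edge $p_1 u_1$, which is therefore a bridge.

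The main obstacle is really only the second bridge subcase, which is a matter of careful bookkeeping: one has to track the local rewiring near $v$ and verify that $u_1$ acquires exactly one new incident edge (namely $p_1 u_1$) while losing its adjacency with $v$, so that $p_1 u_1$ becomes the unique link between $u_1$'s side of the original bridge and the rest of $F(G, v)$. Granted this, the remainder is an immediate application of Theorem~\ref{thm:6} and the embedding proposition.
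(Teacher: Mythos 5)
Your proof is correct and takes the same route as the paper, which simply derives the lemma from Theorem~\ref{thm:6} (the Fowler construction applied at a degree-$3$ vertex). You additionally work out the bridge-preservation bookkeeping that the paper leaves implicit, and your case analysis there (including the observation that when the bridge is incident to the chosen cubic vertex the new edge $p_1u_1$ becomes a bridge) is sound.
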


Lemma \ref{lem:11} follows from Theorem~\ref{thm:6}.

\subsection{Non-realisability results}
\label{subsec:nonrel}

\begin{lemma}
\label{lem:12}
No chemical graph with $v_3 = 0$ is a nut graph.
\end{lemma}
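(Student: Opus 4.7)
The plan is to reduce the claim to a statement about cycles and then invoke basic facts about their spectra. Since any nut graph must be leafless (as recalled at the start of Section 2), a chemical graph with $v_3 = 0$ that is also a nut graph must have every vertex of degree exactly $2$. Connectedness, built into the definition of a chemical graph, then forces the graph to be a cycle $C_n$ for some $n \geq 3$. So the whole lemma reduces to showing that no cycle $C_n$ is a nut graph.

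To rule out cycles, I would split into cases by the parity of $n$. When $n$ is even, $C_n$ is bipartite, and the paper already records (in Section 2) that nut graphs are non-bipartite, so these cases are immediate. When $n$ is odd, I would show that the adjacency matrix of $C_n$ is nonsingular, so $0$ is not even an eigenvalue and $C_n$ cannot be a nut graph. This can be done either via the classical closed form $2\cos(2\pi k/n)$ for the eigenvalues of $C_n$ (which equals $0$ only when $4 \mid n$), or more elementarily by computing $\det A(C_n)$ and observing that it is nonzero for odd $n$.

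Combining the two cases gives the result: no cycle is a nut graph, and hence no chemical graph with $v_3 = 0$ is a nut graph. The main (very mild) obstacle is not in any calculation but in making the reduction crisp: one should be explicit that the degree-$2$ hypothesis together with leaflessness and connectedness rules out all connected graphs of maximum degree at most $2$ except for the cycles, so that no further cases (paths, isolated edges, empty graphs) remain to be analysed separately. Once this is in place, the spectral facts about cycles are standard and the argument concludes in a line or two.
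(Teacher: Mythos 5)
Your proposal is correct and follows essentially the same route as the paper: reduce the claim to the statement that no cycle is a nut graph, then dispose of cycles by their known spectra (the paper notes that $C_n$ is singular exactly when $4 \mid n$, and then has nullity $2$, so it is never a nut graph). Your handling of the even case via non-bipartiteness and your more explicit reduction to cycles are minor presentational variations, not a different argument.
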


\begin{proof}
The lemma claims that no cycle is a nut graph. By inspection of spectra, this is indeed the case. Namely, a cycle $C_n$ is singular if and only if
$n$ is divisible by four, in which case it has nullity $2$. Compare column 1 in Table~\ref{tbl:census}.
\end{proof}

\begin{lemma}
No chemical graph with
 $$(v_3,v_2) \in \{((8, 0), (8, 1), (8, 2), (8, 4), (10, 0), (10, 1), (10, 2), (12, 1), (14, 0), (14, 2), (16, 0))\}$$
 is a nut graph.
\label{lemma:cases}
\end{lemma}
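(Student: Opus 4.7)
\medskip
\noindent\textbf{Proof plan.} The plan is to reduce the statement to a finite computational check against the existing database of nut graphs. Every listed pair $(v_3, v_2)$ satisfies $n = v_2 + v_3 \leq 16$, so each candidate chemical graph falls squarely inside the range $n \leq 22$ covered by the exhaustive enumeration of \cite{CoolFowlGoed-2017, nutgen-site} that underpins Table~\ref{tbl:census}.

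I would then, for each of the eleven pairs, filter the database for chemical graphs with the prescribed degree signature, test the adjacency matrix for singularity, and, in the nullity-one case, check whether its unique (up to scalar) kernel eigenvector has an entirely nonzero support. The results of these tests are already tabulated: the boldface zero entries of Table~\ref{tbl:census} at each of the eleven positions $(8,0), (8,1), (8,2), (8,4), (10,0), (10,1), (10,2), (12,1), (14,0), (14,2), (16,0)$ precisely record the statement of the lemma.

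The only real obstacle is justifying exhaustiveness of the underlying search, since the lemma itself is a negative existence result and any gap in the enumeration would leave it open. This is handled as follows. Every chemical graph with parameters $(v_3, v_2)$ arises, as already noted in Section~2, from a connected cubic graph on $v_3$ vertices by subdividing edges to bring the total order up to $v_2 + v_3$; so it suffices to enumerate all connected cubic graphs on $v_3 \in \{8, 10, 12, 14, 16\}$ vertices, together with all distinct subdivision patterns producing the required $v_2$, and apply the nut test to each. The canonical construction of connected cubic graphs in \cite{hog, cubicpaper} guarantees completeness of the cubic families at these small orders, and the filtering routine of \cite{CoolFowlGoed-2017} does the rest. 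Hence the lemma reduces to inspection of eleven boldface zeros in Table~\ref{tbl:census}, with no structural argument required beyond the subdivision reduction.
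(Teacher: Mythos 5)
Your proposal matches the paper's proof: the lemma is established exactly by appeal to the exhaustive computer enumeration of chemical nut graphs for $n \le 22$ recorded as the boldface zeros in Table~\ref{tbl:census}. Your additional remarks on why the enumeration is exhaustive (reduction to connected cubic graphs on $v_3$ vertices plus subdivision patterns) go slightly beyond what the paper writes but do not change the argument.
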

\begin{proof}
The proof follows from computer determination of all chemical nut graphs on $n$ vertices, 
$n \leq 22$, as presented in Table~\ref{tbl:census}, stratified by $(v_3,v_2)$ parameters. The entries covered by this lemma are shown as boldface zeros
in the table.
\end{proof}
\par\noindent
Note that Table~\ref{tbl:census} also shows non-existence of some other pairs $(v_3,v_2)$ with $v_3+v_2 \leq 22$,
but these follow from Lamma \ref{lem:12} and Corollary \ref{cor:13} 

It turns out that only half of the pairs $(v_3, v_2)$ for $v_3 \in \{2, 4\}$  are only realisable (depending on the parity of $v_2$). To prove this, we need the following result.

\begin{theorem} 
\label{cor:realisx}
Let $G$ be a leafless chemical graph with $v_2 \geq \frac{9}{2}v_3 + 1$. Then the pair $(v_3,v_2)$ is realisable if and only if the pair $(v_3,v_2-4)$ is realisable. If the latter pair is not realisable
then none of the pairs $(v_3,v_2 + 4k)$, $k \in \{0,1,2,\ldots\}$, is realisable.
\end{theorem}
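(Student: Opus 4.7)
The plan is to establish the biconditional ``$(v_3,v_2)$ is realisable if and only if $(v_3,v_2-4)$ is realisable'' under the hypothesis $v_2 \ge \tfrac{9}{2}v_3+1$, and then to obtain the closing sentence by a straightforward induction on $k \ge 0$, noting that $v_2+4k$ still satisfies the hypothesis, so the iff can be iterated to propagate non-realisability of $(v_3,v_2-4)$ to every $(v_3,v_2+4k)$.

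The ``if'' direction is immediate from Lemma~\ref{lem:10}: starting from any chemical nut graph realising $(v_3,v_2-4)$, inserting four vertices into any edge yields, by Proposition~\ref{prop:subdivision}, a chemical nut graph realising $(v_3,v_2)$. For the converse, let $G$ be a chemical nut graph realising $(v_3,v_2)$. Following the remark after Table~\ref{tbl:census}, I would view $G$ as arising from a cubic multigraph $H$ on $v_3$ vertices, with its $\tfrac{3v_3}{2}$ edges subdivided by a total of $v_2$ inserted degree-$2$ vertices. Since $v_2 \ge \tfrac{9v_3}{2}+1 > 3 \cdot \tfrac{3v_3}{2}$, the pigeonhole principle forces some edge $e$ of $H$ to be subdivided at least four times, yielding in $G$ a path $u-s_1-\cdots-s_k-v$ with $k \ge 4$ and every $s_i$ of degree $2$. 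Removing any four consecutive $s_i$ and joining their outer neighbours is the inverse of the subdivision construction, so by Proposition~\ref{prop:subdivision} the result $G'$ has parameters $(v_3,v_2-4)$ and inherits the nut property from $G$, provided $G'$ is itself a simple chemical graph.

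The main obstacle is this simplicity check. When $k \ge 5$ it is harmless to remove four consecutive $s_i$ from the interior of the path, because then at least one endpoint of the deleted stretch is a degree-$2$ vertex whose two original neighbours are already specified, guaranteeing that the new edge is simple and non-duplicated. The delicate case is $k = 4$, where the reconnection is forced to be $uv$: this creates a loop if $e$ is a loop of $H$ (so $u = v$), and a multi-edge if $u$ and $v$ are already joined in $G$ via another, unsubdivided parallel edge of $H$. To handle these residual configurations I would exploit the slack in the pigeonhole bound, which typically leaves several over-subdivided edges of $H$, to pass to a non-problematic choice of $e$; and where no such alternative exists, fall back on a local analysis of the kernel eigenvector along the short cycle induced at $u$ (and $v$) to show either that the configuration is incompatible with $G$ being a nut graph, or that an alternative reduction (for instance, combining the subdivision reversal with a bridge reversal from Proposition~\ref{prop:4}) suffices. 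This is the step in which I expect the bulk of the technical effort to sit.
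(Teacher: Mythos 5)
Your core argument coincides with the paper's: view $G$ as a subdivision of a cubic multigraph $H$ on $v_3$ vertices with $m=\tfrac{3}{2}v_3$ edges, use $v_2\ge 3m+1$ and the pigeonhole principle to find an edge of $H$ carrying at least four degree-$2$ vertices, reverse the subdivision construction (Proposition~\ref{prop:subdivision}) for the ``only if'' direction and apply it forwards for the ``if'' direction, and iterate for the final claim. So in outline the proposal and the paper's proof are the same.

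Where you diverge is in flagging the simplicity of the contracted graph, and this is both a genuine issue (the paper's one-paragraph proof is silent on it) and the point at which your proposal stops being a proof. The obstacle is real: a loop of $H$ subdivided exactly four times is a pendant $5$-cycle, whose contraction is a loop, and pendant $5$-cycles do occur in chemical nut graphs --- the $(v_3,v_2)=(2,7)$ seed graph in Appendix~S contains one (the cycle $4\,5\,6\,7\,8$), so the configuration is locally consistent with a nowhere-zero kernel vector and cannot be excluded by the ``local analysis of the kernel eigenvector'' you invoke. Nor does the pigeonhole slack save you in general: the hypothesis allows $v_2=3m+1$ with a single over-subdivided edge, namely a loop carrying exactly four degree-$2$ vertices while every other edge carries exactly three, in which case there is no alternative choice of $e$. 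What a complete argument needs is a more global counting step --- e.g., a problematic parallel pair (one edge with four subdivision vertices, its partner with none) contributes a deficit of two against the average of three subdivisions per edge, so if every over-subdivided edge were problematic there would have to be a problematic loop, and one must then show that a chemical nut graph with $v_2\ge 3m+1$ whose only over-subdivided edges are pendant $5$-cycles cannot exist, or exhibit a different reduction for it. Until that case is closed, the ``only if'' direction --- the one on which Corollary~\ref{cor:13} and hence the infinite series of non-realisable pairs depend --- remains open in your write-up, exactly as it does in the paper's.
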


\begin{proof}
Since $G$ is a leafless chemical graph it can be viewed as a general subdivision of a cubic graph $H$ on $v_3$ vertices.
Hence, $H$ has $m = 3 v_3 / 2$ edges. And $G$ has its $v_2$ vertices of degree $2$ placed on these edges, forming corresponding paths.
As $v_2 \geq \frac{9}{2}v_3 + 1 = 3m+1$, by the Pigeonhole Principle, when forming $G$ from $H$, at least one of the edges of $H$ must be
replaced by a path containing at least $4$ degree-$2$ vertices. 
The last claim of this theorem follows by repeated application of Lemma~\ref{lem:10}.
\end{proof}

In the case $v_3 = 2$, Theorem~\ref{cor:realisx} applies to $v_2 \geq 10$; in the case $v_3 = 4$, it applies to $v_2 \geq 19$.

\begin{corollary} 
\label{cor:13}
Since pairs $(v_3,v_2) \in \{(2,6),(2,8),(4,15),(4,17)\}$ are not realisable, this means that no pair $(2,2k)$ and $(4,2k+1)$ is realisable, for $k \geq 0$. 
\end{corollary}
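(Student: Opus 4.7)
The plan is to apply Theorem \ref{cor:realisx} as an inductive step to each of the four hypothesised non-realisable pairs, and then to fill in the small-$v_2$ cases that lie below the theorem's threshold by inspection of Table \ref{tbl:census}. The general shape of the argument is uniform: for each fixed $v_3 \in \{2,4\}$, Theorem \ref{cor:realisx} propagates non-realisability independently within each residue class modulo $4$ above the threshold $v_2 \geq \tfrac{9}{2}v_3 + 1$, so it suffices to exhibit one non-realisable seed in each class at or just above the threshold and to dispose of the sub-threshold values by hand.

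First I would treat $v_3 = 2$, where the threshold is $v_2 \geq \tfrac{9}{2}(2) + 1 = 10$. The three sub-threshold even values $v_2 \in \{0,2,4\}$ are read off directly from Table \ref{tbl:census}, where each is recorded as $\emptyset$ or a boldface $0$. Then the final clause of Theorem \ref{cor:realisx} would be invoked twice: taking $v_2 = 10$ with seed $(v_3,v_2-4) = (2,6)$ yields non-realisability of $(2,10+4k)$ for every $k \geq 0$, and taking $v_2 = 12$ with seed $(2,8)$ yields non-realisability of $(2,12+4k)$ for every $k \geq 0$. Combining these two arithmetic progressions with the three sub-threshold entries accounts for every even $v_2 \geq 0$, delivering the $(2,2k)$ half of the claim.

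The case $v_3 = 4$ proceeds in exactly the same pattern, now with threshold $v_2 \geq \tfrac{9}{2}(4) + 1 = 19$. The sub-threshold odd cases $v_2 \in \{1,3,5,7,9,11,13\}$ are supplied by Table \ref{tbl:census} (each a boldface $0$); the hypothesised pairs $(4,15)$ and $(4,17)$ serve as the two seeds at $v_2-4$; and Theorem \ref{cor:realisx} applied at $v_2 = 19$ and $v_2 = 21$ respectively rules out the progressions $(4,19),(4,23),\ldots$ and $(4,21),(4,25),\ldots$. Together these exhaust every odd $v_2 \geq 1$, delivering the $(4,2k+1)$ half of the claim.

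There is no genuine mathematical obstacle here: the corollary is an immediate inductive consequence of Theorem \ref{cor:realisx}. The only care needed is bookkeeping, namely verifying that the supplied seeds together with the tabulated sub-threshold entries cover both residue classes modulo $4$ in each of the two rows $v_3 \in \{2,4\}$; the thresholds $10$ and $19$ are close enough to the boldface seeds of Table \ref{tbl:census} that this is straightforward.
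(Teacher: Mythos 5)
Your proposal is correct and follows essentially the same route as the paper: the paper's remark that Theorem~\ref{cor:realisx} applies for $v_2 \geq 10$ when $v_3 = 2$ and for $v_2 \geq 19$ when $v_3 = 4$ is exactly your threshold computation, and the corollary is then obtained by propagating the four seed pairs along the two residue classes modulo $4$ while reading the sub-threshold entries from Table~\ref{tbl:census}. Your write-up merely makes explicit the bookkeeping that the paper leaves implicit.
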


By using Corollary 13, we have generated two infinite series of zeros (non-realisable pairs), namely, for $v_3 = 2$ and $v_3 = 4$.
For the case $v_3 = 0$ we use Lemma 2 to obtain a third infinite series of zeros. The remaining zeros are finite in number
and are covered by Lemma 3 based on the database of chemical graphs in House of Graphs. 
The final step in the proof of Theorem 1 is to establish that outside the triangular region of
parameter space covered by the database, every pair that does
not belong to one of the infinite series of zeros (unrealisable cases), is realisable.
To do this we use the notion of seed graphs.

\subsection{Seed graphs}

A \emph{seed graph} is a chemical nut graph that cannot be generated from any smaller chemical nut graph by  
some combination of the three constructions.
We will show that all realisable pairs can be generated from a small set of seed graphs by repeated application of the constructions.

\begin{lemma}
Chemical nut graphs with the following parameters exist:
\begin{enumerate}[label=(\alph*)]
\item 
Planar, with a bridge (B):
$$(v_3,v_2) \in \{(2,7),(4,10),(6,7),(6,8),(8,5),(8,6),(10,4),(10,5),(21,3),(14,4),(20,2)\} $$
\item
Planar $2$-connected (P): 
$$(v_3,v_2) \in \{ (10,3),(12,2),(14,1),(16,1),(16,2),(18,1),(22,0) \}$$
\item
Polyhedral (i.e.\ planar $3$-connected) ($\mathit{\Pi}$): 
$$(v_3,v_2) \in \{ (12,0),(26,0), (28, 0) \}$$
\item
Non-planar, genus one (N): $$(v_3,v_2)  \in \{  (20,0) \}$$
\end{enumerate}
\end{lemma}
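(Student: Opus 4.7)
The plan is to prove this lemma by explicit exhibition: for each of the finitely many parameter pairs $(v_3,v_2)$ listed, I would produce a concrete chemical nut graph and then verify the topological attribute demanded by its category — presence of a bridge, $2$-connectedness, $3$-connectedness, or embedding on the torus but not on the sphere. The three verifications needed per graph are independent and routine on any fixed graph: the chemical property amounts to a degree check, the nut property to showing $\operatorname{rank}(A)=n-1$ together with a nonvanishing kernel eigenvector, and the connectivity/planarity/genus attributes to standard graph-theoretic tests.

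For categories (a), (b) and the $(20,0)$ entry of (d), every pair lies inside the range $n\leq 22$ covered by Table~\ref{tbl:census}, so existence of \emph{some} chemical nut graph with those parameters is already guaranteed by the tabulated counts. The work here is to sift the candidates in the database \cite{nutgen-site,hog} and to locate at least one representative with the extra topological property: for category (a), a cut edge must be present; for category (b), the graph must be $2$-connected and planar; and for the $(20,0)$ seed, the graph must be non-planar but embeddable on the torus. I would filter the enumerated graphs by bridge-detection, block-cut tree analysis, and a planarity check (e.g.\ the Hopcroft--Tarjan test), then certify the toroidal embedding of one of the five $(20,0)$ candidates by giving its rotation system explicitly and checking the Euler-characteristic balance.

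For the remaining large seeds in (b) and (c) — notably $(22,0)$, $(26,0)$ and $(28,0)$, which lie beyond the range of Table~\ref{tbl:census} — I would extend the enumeration used in \cite{CoolFowlGoed-2017} to these orders, or, equivalently, search among cubic planar graphs of the appropriate order. Since all three have $v_2=0$, each is cubic; for (c), polyhedrality reduces to $3$-connectedness plus planarity, and the classification of small cubic polyhedra makes this a finite search. Verification of the nut property on each produced candidate is a single rank computation and kernel inspection.

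The main obstacle is the $(20,0)$ entry of category (d), where the substantive content is not merely existence but the stronger fact that \emph{no} planar realisation exists together with the positive claim of genus exactly one. The first half requires testing planarity on every chemical nut graph with parameters $(20,0)$ — which is feasible precisely because the database provides a complete list of five — while the second half demands an explicit toroidal embedding for at least one. A secondary difficulty is ensuring that the specific extra attribute (bridge versus $2$-connected versus $3$-connected) truly holds for at least one nut graph with the given $(v_3,v_2)$, since the database counts alone do not stratify by connectivity; this must be resolved by direct inspection of each short list.
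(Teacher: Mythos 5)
Your proposal matches the paper's proof in substance: the paper establishes this lemma purely by exhibiting, in its appendices, an explicit edge list and integer kernel eigenvector for each seed, leaving the nut, degree, bridge/connectivity and planarity/genus checks as routine finite verifications on fixed graphs, exactly as you describe. The only point where you do more than needed is category (d): the lemma itself asks only for one genus-one witness with $(v_3,v_2)=(20,0)$ (given in Appendix N), while the stronger claim that none of the five such graphs is planar belongs to the ``Moreover'' clause of Theorem~\ref{thm:main} rather than to this lemma.
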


\begin{proof}
By inspection of adjacency data presented in the appendices
for each seed chemical nut graph. 
\end{proof}

Note that other chemical nut graphs with these parameters
may exist and that other choices of seed graphs are possible.

\subsection{Proof of Theorem~{\ref{thm:main}}}

\begin{proof}[Proof]
To prove the realisability result we construct Table 2 according to the following rules.
We start by indicating the parameters of the $22$
seed graphs listed in Lemma 14 using boldface B, P, $\Pi$, N as appropriate.
A bold X is used to indicate a non-realisable entry derived from Table 1.
Other entries are filled in according to three rules based on
lemmas:
\begin{enumerate}[label=(\roman*)]
\item 
 (Lemma~{\ref{lem:9}}) if we have B in the table (meaning a planar chemical nut graph with a bridge) then we can insert 
B two entries below;
\item (Lemma~{\ref{lem:10}}) if we have P or $\Pi$ in the table (meaning a planar chemical nut graph without a bridge) then we 
can insert P four entries
below; 
\item  (Lemma~\ref{lem:11}) if we have B, P or $\Pi$ in a row of the table then three steps to the right we have B, P or $\Pi$, respectively. 
 \end{enumerate}
We proceed initially by columns. 
Column $v_3 = 0$ is entirely non-realisable (see Lemma \ref{lem:12}).
The first boldface entry in each of columns $v_3 = 2$ 
and $v_3 = 4$  gives a series of realisable cases.
All other entries in these columns
are non-realisable: this follows from Corollary \ref{cor:13}. 
In column $v_3 = 6$, 
consecutive boldface B entries prove realisability for all $v_2 \geq 7$. Likewise, in column $v_3 = 8$, entry
$(3, 8)$ in combination with $(6, 8)$ proves realisability for all $v_2 \geq 5$ (by a planar graph containing a bridge). 
Similarly, in column $v_3 = 10$ 
the
entries $(4, 10)$ and $(5, 10)$ prove realisability for all $v_2 \geq 4$. Having established the rectangle
of six
cases outlined in the table (with corners 
$(7, 6)$, $(7, 10)$, $(8, 6)$, and $(8, 10)$), we fill the whole quadrant to the right and below.
Rows $v_2 = 7$ and $v_2 = 8$ are generated by the Fowler construction and all entries below by repetition of the bridge construction.
We are left with rows for $v_2 < 7$. For each of them we start with a seed graph and then repeat the Fowler construction.
Non-realisability of remaining entries follows from Table 1.
By using $(0, 26)$ as a seed graph we were able to show the existence of planar graphs for all entries to the right.
\end{proof}

\vspace{\baselineskip}

\newcommand{\mbg}{} 
\newcommand{\ybg}{} 
\newcommand{\bbg}{} 

\begin{table}[!h]
\centering
\begin{small}
\begin{tabular}{|c|rrr:rrrrrrrrrrrrr|}
 \hline
\backslashbox{$v_2$}{$v_3$}  & 0    & 2    & \multicolumn{1}{r}{4 }    & 6    & 8     & 10     & 12     & 14      & 16      & 18      & 20     & 22  & 24 & 26 & 28 & $\cdots$   \\  \hline
$0$ & $\emptyset$ & $\emptyset$ & \multicolumn{1}{r}{${\bf X}$}  & ${\bf X}$ & ${\bf X}$ & ${\bf X}$ & ${\bf \Pi}$ & ${\bf X}$ & ${\bf X}$ & $F\mathit{\Pi}$ & ${\bf N}$ & $\ybg {\bf P}$ & $\ybg F\mathit{\Pi}$ & $\ybg {\bf \Pi}$ & ${\bf \Pi}$ & $\Rightarrow$ \\
$1$ & $\emptyset$ & $\emptyset$ & \multicolumn{1}{r}{${\bf X}$} & ${\bf X}$ & ${\bf X}$ & ${\bf X}$ & ${\bf X}$ & $\ybg {\bf P}$ & $\ybg {\bf P}$ & $\ybg  {\bf P}$ & $FP$ & $FP$ & $FP$ & $FP$ & $FP$ & $\Rightarrow$ \\
$2$ & $\emptyset$ & ${\bf X}$ & \multicolumn{1}{r}{${\bf X}$} & ${\bf X}$ & ${\bf X}$ & ${\bf X}$ & ${\bf P}$ & ${\bf X}$ & $\ybg {\bf P}$ & $\ybg  FP$ & $\ybg  {\bf B}$ & $FP$ & $FP$ & $FB$ & $FP$ & $\Rightarrow$ \\
$3$ & $X$ & ${\bf X}$ & \multicolumn{1}{r}{${\bf X}$} & ${\bf X}$ & $\ybg {\bf B}$ & $\ybg {\bf P}$ & $\ybg {\bf B}$ & $FB$ & $FP$ & $FB$ & $FB$ & $FP$ & $FB$ & $FB$ & $FP$ & $\Rightarrow$ \\
$4$ & $X$ & ${\bf X}$ & \multicolumn{1}{r}{${\bf X}$} & ${\bf X}$ & ${\bf X}$ & $\ybg {\bf B}$ & $\ybg  P$ & $\ybg {\bf B}$ & $FB$ & $P$ & $FB$ & $FB$ & $FP$ & $FB$ & $FB$ & $\Rightarrow$ \\
$5$ & $X$ & ${\bf X}$ & \multicolumn{1}{r}{${\bf X}$} & ${\bf X}$ & $\ybg B$ & $\ybg {\bf B}$ & $\ybg B$ & $FB$ & $FB$ & $FB$ & $FB$ & $FB$ & $FB$ & $FB$ & $FB$ & $\Rightarrow$ \\
$6$ & $X$ & $\bbg X$ & \multicolumn{1}{r}{${\bf X}$} & ${\bf X}$ & $\ybg {\bf B}$ & $\ybg B$ & $\ybg P$ & $FB$ & $FB$ & $FP$ & $FB$ & $FB$ & $FP$ & $FB$ & $FB$ & $\Rightarrow$ \\
\cline{5-7} \cdashline{8-16}
$7$ & $X$ & $\bbg {\bf B}$ & ${\bf X}$ & \multicolumn{1}{|r}{$\mbg {\bf B}$ } & $\mbg B$ & \multicolumn{1}{r|}{$\mbg B$ } & $FB$ & $FB$ & $FB$ & $FB$ & $FB$ & $FB$ & $FB$ & $FB$ & $FB$ & $\Rightarrow$ \\
$8$ & $X$ & $\bbg X$ & ${\bf X}$ & \multicolumn{1}{|r}{$\mbg {\bf B} $} & $\mbg B$ &\multicolumn{1}{r|}{$\mbg B$ } & $FB$ & $FB$ & $FB$ & $FB$ & $FB$ & $FB$ & $FB$ & $FB$ & $FB$ & $\Rightarrow$ \\
\cline{5-7} 
$9$ & $X$ & $\bbg B$ & ${\bf X}$ & $B$ & $B$ & $B$ & $B$ & $B$ & $B$ & $B$ & $B$ & $B$ & $B$ & $B$ & $B$ & $\Rightarrow$ \\
$10$ & $X$ & $X$ & ${\bf B}$ & $B$ & $B$ & $B$ & $B$ & $B$ & $B$ & $B$ & $B$ & $B$ & $B$ & $B$ & $B$ & $\Rightarrow$ \\
$11$ & $X$ & $B$ & ${\bf X}$ & $B$ & $B$ & $B$ & $B$ & $B$ & $B$ & $B$ & $B$ & $B$ & $B$ & $B$ & $B$ & $\Rightarrow$ \\
$12$ & $X$ & $X$ & $B$ & $B$ & $B$ & $B$ & $B$ & $B$ & $B$ & $B$ & $B$ & $B$ & $B$ & $B$ & $B$ & $\Rightarrow$ \\
$13$ & $X$ & $B$ & ${\bf X}$ & $B$ & $B$ & $B$ & $B$ & $B$ & $B$ & $B$ & $B$ & $B$ & $B$ & $B$ & $B$ & $\Rightarrow$ \\
$14$ & $X$ & $X$ & $B$ & $B$ & $B$ & $B$ & $B$ & $B$ & $B$ & $B$ & $B$ & $B$ & $B$ & $B$ & $B$ & $\Rightarrow$ \\
$15$ & $X$ & $B$ & $\bbg X$ & $B$ & $B$ & $B$ & $B$ & $B$ & $B$ & $B$ & $B$ & $B$ & $B$ & $B$ & $B$ & $\Rightarrow$ \\
$16$ & $X$ & $X$ & $\bbg B$ & $B$ & $B$ & $B$ & $B$ & $B$ & $B$ & $B$ & $B$ & $B$ & $B$ & $B$ & $B$ & $\Rightarrow$ \\
$17$ & $X$ & $B$ & $\bbg X$ & $B$ & $B$ & $B$ & $B$ & $B$ & $B$ & $B$ & $B$ & $B$ & $B$ & $B$ & $B$ & $\Rightarrow$ \\
$18$ & $X$ & $X$ & $\bbg B$ & $B$ & $B$ & $B$ & $B$ & $B$ & $B$ & $B$ & $B$ & $B$ & $B$ & $B$ & $B$ & $\Rightarrow$ \\[-3pt]
$\vdots$ & $\Downarrow$ & $\Downarrow$ &  \multicolumn{1}{r}{$\Downarrow$}  & $\Downarrow$ & $\Downarrow$ & $\Downarrow$ & $\Downarrow$ & $\Downarrow$ & $\Downarrow$ & $\Downarrow$ & $\Downarrow$ & $\Downarrow$ & $\Downarrow$ & $\Downarrow$ & $\Downarrow$ & \\
\hline
\end{tabular}
\end{small}
\caption{Parameter pairs $(v_3,v_2)$ for which chemical nut graphs exist. Notation: $\emptyset$ - no graph exists; $X$ - no nut graph exists; $B$ - a planar graph with a bridge exists ($FB$ - obtained by Fowler construction); $P$ - a $2$-connected planar graph exists ($FP$ - obtained by Fowler construction); $\mathit{\Pi}$ - a polyhedral graph exists ($F\mathit{\Pi}$ - obtained by Fowler construction); $N$ - only non-planar graphs exist; $\Rightarrow$ - the pattern extends indefinitely to the right; $\Downarrow$ - the pattern extends indefinitely below. Boldface symbols were determined from the graph database: {\bf B, P, $\bf \Pi$, N} represent seed graphs, while {\bf X} represents non-existence that follows from Table 1.
Dashed lines indicate the quadrant that can be completely filled with bridged cases, once the boxed rectangle is established.
}
\label{tbl:map}
\end{table}

\section{Polyhedral and toroidal chemical nut graphs}
Before we conclude this paper we want to address a special class of chemical nut graphs, namely the polyhedral chemical nut graphs, i.e.\ 
$3$-connected planar chemical nut graphs. In practice this means that only cubic graphs, $v_2 = 0$, are eligible. The smallest chemical nut graphs of this type were found
in \cite{Prop23} and the list was extended in
\cite{hog,nutgen-site,CoolFowlGoed-2017,GPS}.
There are two cubic polyhedral nut graphs on $12$ vertices,
illustrated in \cite{Prop23}. One of these (Figure 17(b) \cite{Prop23}) is the Frucht graph \cite{GPS}; the other
(Figure 17(a) \cite{Prop23})
can be realised with $C_2$ point-group symmetry.
Since the result of applying the Fowler construction to a polyhedral nut graph is a polyhedral nut graph, these seed graphs give rise to an infinite series of parameters
$v_3 = 12,18,24, \ldots $ that admit polyhedral nuts.  
In \cite{GPS}, a cubic nut graph with $v_3 = 26$
is identified. In fact, this is one of four.  
By the same argument
we obtain another infinite series $v_3 = 26,32,38, \ldots $.  From Table 2 one can see that no polyhedral nut graph exists for $v_3 = 14,16,,20$. 
There is no cubic polyhedral nut graph with $v_3 =22$ (\cite{Prop23}, 
confirmed in \cite{hog,nutgen-site,CoolFowlGoed-2017,GPS}).  
There are 316 chemical polyhedral nut graphs on $v_3 = 28$ vertices \cite{hog,nutgen-site,CoolFowlGoed-2017}. This gives rise to the final infinite series: $v_3 = 28,34,40, \ldots$. The adjacency
data of the three polyhedral
seeds are given in the Appendix $\Pi$.
These observations completely characterise admissible parameters and hence prove the following proposition.

\begin{proposition}
A polyhedral chemical nut graph with parameters $(v_3,v_2)$ exists if and only if $v_2 = 0$ and $v_3$ is even with $v_3 = 12$, $v_3 = 18$ or $v_3 \geq 24$.
\end{proposition}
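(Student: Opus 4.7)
The plan is to split the proof into an existence direction and a non-existence direction, leveraging Proposition 8: the Fowler construction sends polyhedral nut graphs to polyhedral nut graphs. First I would observe that any polyhedral graph is $3$-connected, so every vertex has degree at least $3$; combined with the chemical condition (maximum degree at most $3$) this forces $v_2 = 0$ and $v_3$ even. The task therefore reduces to determining which even positive integers $v_3$ admit a cubic polyhedral nut graph.

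For the existence direction, I would rely on the three cubic polyhedral seed graphs at $v_3 = 12, 26, 28$ whose adjacency data are cited as being given in Appendix $\Pi$. Since the Fowler construction at a cubic vertex replaces that vertex by the $10$-vertex gadget of Figure~\ref{fig:4}, adding exactly $6$ vertices of degree $3$, and since by Proposition 8 polyhedrality is preserved, iterating the construction from each seed yields an arithmetic progression with common difference $6$. The three progressions
$$\{12, 18, 24, 30, \ldots\}, \quad \{26, 32, 38, \ldots\}, \quad \{28, 34, 40, \ldots\}$$
cover the residue classes $0, 2, 4 \pmod{6}$ for all $v_3 \geq 24$, so their union is precisely $\{12, 18\} \cup \{v_3 \text{ even} : v_3 \geq 24\}$, which is the realisable set claimed.

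For the non-existence direction, I would dispose of the remaining even values $v_3 \in \{0, 2, 4, 6, 8, 10, 14, 16, 20, 22\}$ by stratifying the arguments. The case $v_3 = 0$ is Lemma~\ref{lem:12}. For $v_3 \in \{2, 4, 6, 8, 10, 14, 16\}$, Table~\ref{tbl:census} already shows that no chemical nut graph with $v_2 = 0$ exists at all, so a fortiori none is polyhedral. For $v_3 = 20$, Table~\ref{tbl:census} lists $5$ cubic chemical nut graphs, and the statement of Theorem~\ref{thm:main} records that all of them have genus $\geq 1$, so polyhedrality is excluded.

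The main obstacle is $v_3 = 22$: here $71$ cubic chemical nut graphs exist and at least one is planar (marked $\mathbf{P}$ in Table~\ref{tbl:map}), so non-existence of a polyhedral example cannot be read off from a nullity count and must be argued by direct inspection of the database. I would settle this case by citing the prior determination in \cite{Prop23} and confirming it via the exhaustive enumeration in \cite{hog,nutgen-site,CoolFowlGoed-2017,GPS}, checking that none of the $71$ cubic chemical nut graphs on $22$ vertices is both planar and $3$-connected. Together the existence progressions and the case-by-case exclusions yield the stated characterisation.
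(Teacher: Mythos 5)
Your proposal is correct and follows essentially the same route as the paper: reduce to cubic graphs via $3$-connectivity, generate the series $v_3 \equiv 0, 2, 4 \pmod 6$ from the polyhedral seeds at $v_3 = 12, 26, 28$ using the fact that the Fowler construction preserves polyhedrality, and exclude the remaining small even values by the census data, the genus of the five $(20,0)$ graphs, and the cited/computationally confirmed non-existence at $v_3 = 22$. You are slightly more explicit than the paper in isolating $v_3 = 22$ as the one case requiring a direct $3$-connectivity check of the database rather than a planarity or count argument, but the substance is identical.
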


A subset of the cubic polyhedra of special interest
for the chemistry of carbon cages
is that of the fullerenes. A \emph{fullerene} has a cubic polyhedral molecular graph in which exactly  $12$ faces are of size $5$ 
and any other faces are of size $6$. Nut fullerenes seem
relatively rare \cite{Prop23}, and have been enumerated up to order $240$ 
\cite{hog,nutgen-site,CoolFowlGoed-2017}. 
The smallest nut fullerene has $36$ vertices and a construction given in \cite{Prop23} (Figure 14) shows that the unique 
C$_{36}$ nut fullerene can be extended by adding belts of six hexagons at a time to make larger cylindrical fullerenes that are also nut graphs. This rationalises the presence of the integers $n = 24+12k$ ($k > 0$) in the published list of orders of fullerene nut graphs \cite{hog,nutgen-site,CoolFowlGoed-2017}. The members of this sub-series all have the same six-pentagon cluster in each cylinder cap, and all are \emph{uniform} \cite{Prop23} in the sense that every vertex is surrounded in the kernel eigenvector by the same triple of entries $\{2, -1, -1\}$.

All nut fullerenes known so far have multiple pentagon adjacencies, which militates against
their stability as neutral all-carbon molecules.

Incidentally, as there is, in addition to the example on $20$ vertices, a chemical nut graph of genus $1$ on $22$ and $24$ vertices (see Appendix N),
we have an infinite series of toroidal $3$-regular chemical nut graphs. More precisely, there is a toroidal $3$-regular nut graph on $n$ vertices
if and only if $n$ is 
even and $n \geq 20$.

\section{Conclusion}

The parameter space occupied by chemical nut graphs
has been characterised in terms of 
allowed vertex signatures $(v_3, v_2)$,
according to Theorem {\ref{thm:main}}.
This result
has implications for the 
theory of molecular conduction, in that each 
chemical nut graph
corresponds to the carbon skeleton of a $\pi$-conjugated 
molecule with a single non-bonding molecular orbital
and strong omni-conducting 
behaviour \cite{PWF_CPL_2013,PWF_JCP_2014}.

The characterisation of chemical nut graphs by vertex degrees
also implies restrictions on their orders and sizes,
$n$ and $m$.
As a chemical nut graph has no vertices of degree $1$ and maximum degree $\Delta \le 3$, the Theorem \ref{thm:main} implies
a restricted range of Betti numbers, $m-n+1$, for these graphs.
For a chemical nut graph, $v_1 = 0$, $v_2 \ge 0$
and $v_3 > 0$ is even. Hence, $n = v_2 + v_3$, 
$m = v_2 + (3/2) v_3$, and the Betti numbers span the range
\begin{equation}
1 < \frac{v_3}{2} +1 =  m-n+1 \leq \left\lceil{\frac{n + 1}{2}}\right\rceil
\label{eq:range}
\end{equation}
Transforming Table 2 from $(v_3, v_2)$ to $(n, m)$
space, shows that for $n = 15$ and $n \ge 17$, there is 
just one missing value in the range \eqref{eq:range}:
for odd $n \ge 15$ no chemical nut graph has $m-n+1 =3$, and for even $n \ge 18$ no chemical nut graph has $m-n+1 =2$.  
For all other realisable values of $n$ (more precisely, $9 \leq n \leq 14$ and $n = 16$), there are at least two missing values in the range \eqref{eq:range}.

Although we have settled the question of the existence of chemical nut graphs, there several intriguing related topics that we can
address in future work. One of them is the question of the existence of signed chemical nut graphs. For signed graphs, see for instance recent work \cite{BeCiKoWa2019} and~\cite{GhHaMaMa2020}. 
  
\section*{Acknowledgements}
The work of Toma\v{z} Pisanski is supported in part by the Slovenian Research Agency (research program P1-0294 
and research projects J1-2481, N1-0032, J1-9187 and J1-1690), 
and in part by H2020 Teaming InnoRenew CoE.
The work of Nino Bašić is supported in part by the Slovenian Research Agency (research program P1-0294
and research projects J1-2481, J1-9187, J1-1691 and N1-0140).

\bibliographystyle{amcjoucc}
\bibliography{references}

\begin{thebibliography}{10}
\expandafter\ifx\csname urlstyle\endcsname\relax
  \providecommand{\doi}[1]{doi:\discretionary{}{}{}#1}\else
  \providecommand{\doi}{doi:\discretionary{}{}{}\begingroup
  \urlstyle{rm}\Url}\fi

\bibitem{BeCiKoWa2019}
F.~Belardo, S.~M. Cioab{\u a}, J.~Koolen and J.~Wang, Open problems in the
  spectral theory of signed graphs, \emph{Art Discrete Appl. Math.} \textbf{1}
  (2018), \#P2.10, \doi{10.26493/2590-9770.1286.d7b}.

\bibitem{hog}
G.~Brinkmann, K.~Coolsaet, J.~Goedgebeur and H.~M{\'e}lot, House of {G}raphs: a
  database of interesting graphs, \emph{Discrete Appl. Math.} \textbf{161}
  (2013), 311--314, \doi{10.1016/j.dam.2012.07.018}, available at
  \url{https://hog.grinvin.org/}.

\bibitem{cubicpaper}
G.~Brinkmann, J.~Goedgebeur and B.~D. McKay, Generation of cubic graphs,
  \emph{Discrete Math. Theor. Comput. Sci.} \textbf{13} (2011), 69--79,
  \url{https://dmtcs.episciences.org/551}.

\bibitem{nutgen-site}
K.~Coolsaet, P.~W. Fowler and J.~Goedgebeur, Nut graphs, homepage of {N}utgen,
  \url{http://caagt.ugent.be/nutgen/}.

\bibitem{CoolFowlGoed-2017}
K.~Coolsaet, P.~W. Fowler and J.~Goedgebeur, Generation and properties of nut
  graphs, \emph{MATCH Commun. Math. Comput. Chem.} \textbf{80} (2018),
  423--444,
  \url{http://match.pmf.kg.ac.rs/electronic_versions/Match80/n2/match80n2_423-444.pdf}.

\bibitem{Ern_JCP_2011b}
M.~Ernzerhof, Simple orbital theory for the molecular electrician, \emph{J.
  Chem. Phys.} \textbf{135} (2011), 014104, \doi{10.1063/1.3603444}.

\bibitem{Jan}
P.~W. Fowler, J.~B. Gauci, J.~Goedgebeur, T.~Pisanski and I.~Sciriha, Existence
  of regular nut graphs for degree at most {$11$}, \emph{Discuss. Math. Graph
  Theory} \textbf{40} (2020), 533--557, \doi{10.7151/dmgt.2283}.

\bibitem{PWF_JCP_2014}
P.~W. Fowler, B.~T. Pickup, T.~Z. Todorova, M.~Borg and I.~Sciriha,
  Omni-conducting and omni-insulating molecules, \emph{J. Chem. Phys.}
  \textbf{140} (2014), 054115, \doi{10.1063/1.4863559}.

\bibitem{PWF_CPL_2013}
P.~W. Fowler, B.~T. Pickup, T.~Z. Todorova, R.~De~Los~Reyes and I.~Sciriha,
  Omni-conducting fullerenes, \emph{Chem. Phys. Lett.} \textbf{568--569}
  (2013), 33--35, \doi{10.1016/j.cplett.2013.03.022}.

\bibitem{GPS}
J.~B. Gauci, T.~Pisanski and I.~Sciriha, Existence of regular nut graphs and
  the {F}owler construction, \emph{Appl. Anal. Discrete Math.}  (2020), in
  press.

\bibitem{GhHaMaMa2020}
E.~Ghorbani, W.~H. Haemers, H.~R. Maimani and L.~Parsaei~Majd, On
  sign-symmetric signed graphs, \emph{Ars Math. Contemp.}  (2020),
  \doi{10.26493/1855-3974.2161.f55}.

\bibitem{Ern_JCP_2007a}
F.~Goyer, M.~Ernzerhof and M.~Zhuang, Source and sink potentials for the
  description of open systems with a stationary current passing through,
  \emph{J. Chem. Phys.} \textbf{126} (2007), 144104, \doi{10.1063/1.2715932}.

\bibitem{GutmanSciriha-MaxSing}
I.~Gutman and I.~Sciriha, Graphs with maximum singularity, \emph{Graph Theory
  Notes N. Y.} \textbf{30} (1996), 17--20.

\bibitem{Sc1998}
I.~Sciriha, On the construction of graphs of nullity one, \emph{Discrete Math.}
  \textbf{181} (1998), 193--211, \doi{10.1016/s0012-365x(97)00036-8}.

\bibitem{ScOnRnkGr99}
I.~Sciriha, On the rank of graphs, in: Y.~Alavi, D.~R. Lick and A.~Schwenk
  (eds.), \emph{Combinatorics, {G}raph Theory, and {A}lgorithms, {V}olume
  {II}}, New Issues Press, Kalamazoo, MI, 1999 pp. 769--778, {P}roceedings of
  the 8th {Q}uadrennial {I}nternational {C}onference on {G}raph {T}heory,
  {C}ombinatorics, {A}lgorithms, and {A}pplications, dedicated to the memory of
  {P}aul {E}rd{\H o}s, held at {W}estern {M}ichigan {U}niversity, {K}alamazoo,
  {MI}, {J}une 3 -- 7, 1996.

\bibitem{Sc2007}
I.~Sciriha, A characterization of singular graphs, \emph{Electron. J. Linear
  Algebra} \textbf{16} (2007), 451--462, \doi{10.13001/1081-3810.1215}.

\bibitem{Sc2008}
I.~Sciriha, Coalesced and embedded nut graphs in singular graphs, \emph{Ars
  Math. Contemp.} \textbf{1} (2008), 20--31, \doi{10.26493/1855-3974.20.7cc}.

\bibitem{Prop23}
I.~Sciriha and P.~W. Fowler, Nonbonding orbitals in fullerenes: {N}uts and
  cores in singular polyhedral graphs, \emph{J. Chem. Inf. Model.} \textbf{47}
  (2007), 1763--1775, \doi{10.1021/ci700097j}.

\bibitem{ScirihaGutman-NutExt}
I.~Sciriha and I.~Gutman, Nut graphs: maximally extending cores, \emph{Util.
  Math.} \textbf{54} (1998), 257--272.

\bibitem{Streitwieser_1961}
A.~Streitwieser, \emph{Molecular {O}rbital {T}heory for {O}rganic {C}hemists},
  John Wiley and Sons, New York, USA, 1961.

\end{thebibliography}

\pagebreak
\section*{Appendix S: Seed graphs} 

For each seed graph $G$, the entry lists $v_3$, $v_2$, $n$, $m$
followed by $m$ edges $E(G)$ and the kernel eigenvector $\bf x$ as
a list of integer entries.

\vspace{\baselineskip}
\noindent
$\boldsymbol{v_3 = 2, v_2 = 7, n=  9, m = 10}$

\noindent
$
E(G) = \{ (0, 1), (1, 2), (2, 3), (3, 4), (4, 5), (5, 6), (6, 7), (7, 8), (0, 2), (4, 8) \}
$

\noindent
$
{\bf x} = \begin{bmatrix} 1 & 1 & -1 & -2 & 1 & 1 & -1 & -1 & 1 \end{bmatrix} 
$

\vspace{\baselineskip}
\noindent
$\boldsymbol{v_3 = 4, v_2 = 10, n=  14, m = 16}$

\noindent
$
E(G) = \{ (0, 6), (0, 10), (1, 7), (1, 10), (2, 8), (2, 11), (3, 9), (3, 13), (4, 10), (4, 11), (5, 12), (5, 13), \\(6, 12),  (7, 12), (8, 11), (9, 13) \}
$

\noindent
$
{\bf x} = \begin{bmatrix} 1 & 1 & 1 & -1 & -2 & 2 & -1 & -1 & 1 & -1 & 1 & -1 & -1 & 1 \end{bmatrix} 
$

\vspace{\baselineskip}
\noindent
$\boldsymbol{v_3 = 6, v_2 = 7, n=  13, m = 16}$

\noindent
$
E(G) = \{ (0, 6), (0, 10), (1, 7), (1, 8), (1, 10), (2, 7), (2, 9), (2, 12), (3, 8), (3, 11), (4, 9), (4, 11), (5, 11),\\ (5, 12), (6, 10), (9, 12) \}
$

\noindent
$
{\bf x} = \begin{bmatrix} 1 & -2 & 2 & 2 & -1 & -1 & 1 & 2 & -1 & -1 & -1 & 1 & -1 \end{bmatrix} 
$

\vspace{\baselineskip}
\noindent
$\boldsymbol{v_3 = 6, v_2 = 8, n=  14, m = 17}$

\noindent
$
E(G) = \{ (0, 6), (0, 8), (0, 13), (1, 7), (1, 12), (2, 9), (2, 10), (3, 9), (3, 11), (3, 13), (4, 10), (4, 11), (5, 11),\\  (5, 12), (5, 13), (6, 8), (7, 12) \}
$

\noindent
$
{\bf x} = \begin{bmatrix} 1 & 1 & -1 & 1 & 1 & -2 & -1 & 1 & -1 & -1 & 1 & -1 & -1 & 2 \end{bmatrix} 
$

\vspace{\baselineskip}
\noindent
$\boldsymbol{v_3 = 8, v_2 = 3, n=  11, m = 15}$

\noindent
$
E(G) = \{ (0, 5), (0, 9), (1, 6), (1, 7), (1, 10), (2, 6), (2, 8), (2, 10), (3, 7), (3, 8), (3, 10), (4, 8), (4, 9), \\(5, 9), (6, 7) \}
$

\noindent
$
{\bf x} = \begin{bmatrix} 1 & -2 & 1 & 1 & -2 & 1 & 1 & 1 & 1 & -1 & -2 \end{bmatrix} 
$

\vspace{\baselineskip}
\noindent
$\boldsymbol{v_3 = 8, v_2 = 6, n=  14, m = 18}$

\noindent
$
E(G) = \{ (0, 5), (0, 9), (0, 13), (1, 6), (1, 10), (1, 11), (2, 7), (2, 10), (3, 8), (3, 12), (4, 9), (4, 11), (5, 11), \\(6, 10), (7, 12), (7, 13), (8, 12), (9, 13) \}
$

\noindent
$
{\bf x} = \begin{bmatrix} 1 & -2 & 3 & 1 & 1 & 1 & -1 & -2 & 1 & 1 & 2 & -1 & -1 & -2 \end{bmatrix} 
$

\vspace{\baselineskip}
\noindent
$\boldsymbol{v_3 = 10, v_2 = 3, n=  13, m = 18}$

\noindent
$
E(G) = \{ (0, 5), (0, 8), (1, 6), (1, 9), (2, 7), (2, 11), (2, 12), (3, 8), (3, 9), (3, 10), (4, 8), (4, 9), (4, 12),\\ (5, 10), (5, 11), (6, 10), (6, 12), (7, 11) \}
$

\noindent
$
{\bf x} = \begin{bmatrix} 2 & 2 & 1 & 1 & -3 & -3 & 2 & 2 & 3 & -2 & -1 & -1 & -1 \end{bmatrix} 
$

\vspace{\baselineskip}
\noindent
$\boldsymbol{v_3 = 10, v_2 = 4, n=  14, m = 19}$

\noindent
$
E(G) = \{ (0, 5), (0, 9), (0, 10), (1, 6), (1, 9), (1, 11), (2, 7), (2, 11), (3, 8), (3, 12), (3, 13), (4, 9), (4, 12), \\(4, 13), (5, 10), (5, 13), (6, 10), (7, 11), (8, 12) \}
$

\noindent
$
{\bf x} = \begin{bmatrix} 1 & 2 & -1 & 2 & -3 & 1 & -2 & -1 & 1 & 1 & -2 & 1 & -2 & 1 \end{bmatrix} 
$

\vspace{\baselineskip}
\noindent
$\boldsymbol{v_3 = 10, v_2 = 5, n=  15, m = 20}$

\noindent
$
E(G) = \{ (0, 6), (0, 8), (0, 11), (1, 7), (1, 9), (1, 14), (2, 8), (2, 10), (2, 12), (3, 9), (3, 13), (3, 14), (4, 10),\\ (4, 11), (5, 12), (5, 13), (6, 12), (7, 13), (8, 11), (9, 14) \}
$

\noindent
$
{\bf x} = \begin{bmatrix} 2 & -2 & 1 & 3 & -1 & -5 & 4 & 2 & -1 & -1 & 3 & -3 & -2 & 2 & -1 \end{bmatrix} 
$

\vspace{\baselineskip}
\noindent
$\boldsymbol{v_3 = 12, v_2 = 0, n=  12, m = 18}$

\noindent
$
E(G) = \{ (0, 4), (0, 7), (0, 8), (1, 5), (1, 7), (1, 9), (2, 6), (2, 9), (2, 11), (3, 7), (3, 10), (3, 11), (4, 8), \\ (4, 10), (5, 8), (5, 9), (6, 10), (6, 11) \}
$

\noindent
$
{\bf x} = \begin{bmatrix} 1 & 1 & 1 & -2 & 1 & -2 & 1 & 1 & -2 & 1 & 1 & -2 \end{bmatrix} 
$

\vspace{\baselineskip}
\noindent
$\boldsymbol{v_3 = 12, v_2 = 2, n=  14, m = 20}$

\noindent
$
E(G) = \{ (0, 5), (0, 9), (0, 10), (1, 6), (1, 9), (1, 11), (2, 7), (2, 12), (2, 13), (3, 8), (3, 10), (4, 8), (4, 11), \\(4, 13), (5, 9), (5, 12), (6, 10), (6, 11), (7, 12), (7, 13) \}
$

\noindent
$
{\bf x} = \begin{bmatrix} 1 & -3 & -1 & -2 & 2 & 2 & 1 & -1 & -1 & -3 & 1 & 2 & 2 & -1 \end{bmatrix} 
$

\vspace{\baselineskip}
\noindent
$\boldsymbol{v_3 = 12, v_2 = 3, n=  15, m = 21}$

\noindent
$
E(G) = \{ (0, 5), (0, 9), (0, 12), (1, 6), (1, 10), (1, 11), (2, 7), (2, 10), (2, 14), (3, 8), (3, 12), (3, 14), (4, 9),\\ (4, 13), (4, 14), (5, 12), (5, 13), (6, 10), (6, 11), (7, 11), (8, 13) \}
$

\noindent
$
{\bf x} = \begin{bmatrix} 3 & -1 & 2 & 1 & -3 & -4 & -1 & 2 & 7 & 6 & 3 & -2 & -2 & -1 & -5 \end{bmatrix} 
$

\vspace{\baselineskip}
\noindent
$\boldsymbol{v_3 = 14, v_2 = 1, n=  15, m = 22}$

\noindent
$
E(G) = \{ (0, 5), (0, 8), (0, 10), (1, 6), (1, 11), (1, 12), (2, 7), (2, 13), (2, 14), (3, 9), (3, 10), (3, 12), (4, 9), \\(4, 11), (4, 14), (5, 8), (5, 10), (6, 12), (6, 14), (7, 13), (8, 13), (9, 11) \}
$

\noindent
$
{\bf x} = \begin{bmatrix} 1 & 3 & 2 & -2 & -1 & 1 & -1 & 3 & -5 & -2 & 4 & 3 & -2 & -2 & -1 \end{bmatrix} 
$

\vspace{\baselineskip}
\noindent
$\boldsymbol{v_3 = 14, v_2 = 4, n=  18, m = 25}$

\noindent
$
E(G) = \{ (0, 8), (0, 16), (1, 9), (1, 10), (1, 17), (2, 9), (2, 11), (2, 17), (3, 10), (3, 12), (3, 13), (4, 11), \\(4, 15), (4, 16), (5, 12), (5, 13), (5, 14), (6, 12), (6, 14), (6, 15), (7, 13), (7, 14), (7, 15), (8, 16), (9, 17) \}
$

\noindent
$
{\bf x} = \begin{bmatrix} 1 & -1 & 2 & 1 & -2 & -2 & 1 & 1 & 1 & -1 & 2 & 2 & -1 & -1 & 2 & -1 & -1 & -1 \end{bmatrix} 
$

\vspace{\baselineskip}
\noindent
$\boldsymbol{v_3 = 16, v_2 = 1, n=  17, m = 25}$

\noindent
$
E(G) = \{ (0, 6), (0, 9), (0, 11), (1, 7), (1, 12), (1, 16), (2, 8), (2, 14), (3, 10), (3, 13), (3, 16), (4, 10),\\ (4, 14), (4, 15), (5, 11), (5, 12), (5, 13), (6, 9), (6, 11), (7, 13), (7, 14), (8, 15), (8, 16), (9, 12), (10, 15) \}
$

\noindent
$
{\bf x} = \begin{bmatrix} 1 & -1 & -3 & -4 & -3 & -2 & 1 & 6 & 5 & 3 & -2 & -4 & -2 & 6 & -5 & 7 & -4 \end{bmatrix} 
$

\vspace{\baselineskip}
\noindent
$\boldsymbol{v_3 = 16, v_2 = 2, n=  18, m = 26}$

\noindent
$
E(G) = \{ (0, 7), (0, 9), (0, 15), (1, 8), (1, 15), (1, 17), (2, 9), (2, 13), (2, 14), (3, 10), (3, 12), (3, 13), \\(4, 10), (4, 14), (5, 11), (5, 12), (5, 16), (6, 11), (6, 13), (6, 17), (7, 14), (7, 15), (8, 16), (9, 17), (10, 12),\\ (11, 16) \}
$

\noindent
$
{\bf x} = \begin{bmatrix} 12 & -9 & -6 & 3 & 9 & -12 & 3 & -3 & 9 & 6 & 9 & 3 & -12 & 3 & -9 & -3 & 9 & -6 \end{bmatrix} 
$

\vspace{\baselineskip}
\noindent
$\boldsymbol{v_3 = 18, v_2 = 1, n=  19, m = 28}$

\noindent
$
E(G) = \{ (0, 7), (0, 9), (0, 13), (1, 8), (1, 14), (1, 18), (2, 10), (2, 11), (2, 14), (3, 10), (3, 16), (3, 17),\\ (4, 11), (4, 15), (4, 16), (5, 12), (5, 13), (5, 15), (6, 12), (6, 16), (6, 17), (7, 9), (7, 13), (8, 17), \\(9, 18), (10, 14), (11, 18), (12, 15) \}
$

\noindent
$
{\bf x} = \begin{bmatrix} 2 & -2 & 1 & -2 & 3 & -4 & -1 & 2 & 3 & 4 & 1 & -2 & 1 & -6 & 1 & 5 & -3 & 2 & -4 \end{bmatrix} 
$

\vspace{\baselineskip}
\noindent
$\boldsymbol{v_3 = 20, v_2 = 0, n=  20, m = 30}$

\noindent
$
E(G) = \{ (0, 8), (0, 11), (0, 12), (1, 9), (1, 12), (1, 13), (2, 10), (2, 16), (2, 17), (3, 11), (3, 13), (3, 14), \\(4, 11), (4, 17), (4, 18), (5, 12), (5, 15), (5, 19), (6, 13), (6, 15), (6, 16), (7, 14), (7, 17), (7, 18), (8, 14), \\(8, 15), (9, 16), (9, 19), (10, 18), (10, 19) \}
$

\noindent
$
{\bf x} = \begin{bmatrix} 1 & -3 & -1 & 1 & -2 & 2 & 2 & 3 & -4 & -1 & -1 & 1 & 3 & -2 & 1 & -2 & 4 & -3 & 2 & -1 \end{bmatrix} 
$

\vspace{\baselineskip}
\noindent
$\boldsymbol{v_3 = 20, v_2 = 2, n=  22, m = 32}$

\noindent
$
E(G) = \{ (0, 9), (0, 14), (0, 15), (1, 10), (1, 15), (1, 17), (2, 11), (2, 12), (2, 21), (3, 11), (3, 19), (3, 20), \\(4, 12), (4, 20), (5, 13), (5, 17), (5, 18), (6, 13), (6, 18), (6, 19), (7, 14), (7, 15), (7, 16), (8, 18), (8, 20),\\
 (8, 21), (9, 14), (9, 16), (10, 16), (10, 17), (12, 21), (13, 19) \}
$

\noindent
$
{\bf x} = \begin{bmatrix} 1 & 1 & 1 & -1 & 3 & -2 & 4 & -2 & -2 & 1 & 1 & 3 & 1 & -3 & -2 & 1 & 1 & -2 & 5 & -2 & -1 & -4 \end{bmatrix} 
$

\vspace{\baselineskip}
\noindent
$\boldsymbol{v_3 = 22, v_2 = 0, n=  22, m = 33}$

\noindent
$
E(G) = \{ (0, 7), (0, 12), (0, 16), (1, 8), (1, 13), (1, 14), (2, 9), (2, 14), (2, 15), (3, 10), (3, 16), (3, 17), \\(4, 11), (4, 20), (4, 21), (5, 13), (5, 15), (5, 17), (6, 18), (6, 19), (6, 20), (7, 12), (7, 16), (8, 19), (8, 21), \\(9, 14), (9, 19), (10, 17), (10, 18), (11, 20), (11, 21), (12, 18), (13, 15) \}
$

\noindent
$
{\bf x} = \begin{bmatrix} 2 & 1 & -5 & -4 & 1 & 6 & -2 & 2 & -2 & 4 & -2 & 1 & 4 & -1 & 3 & -7 & -6 & 8 & -4 & 2 & 2 & -3 \end{bmatrix} 
$

\vspace{\baselineskip}
\noindent
$\boldsymbol{v_3 = 26, v_2 = 0, n=  26, m = 39}$

\noindent
$
E(G) = \{ (0, 1), (0, 2), (0, 3), (1, 4), (1, 5), (2, 6), (2, 7), (3, 8), (3, 9), (4, 10), (4, 11), (5, 12), (5, 13),\\ (6, 14), (6, 15), (7, 8), (7, 16), (8, 17), (9, 10), (9, 18), (10, 19), (11, 12), (11, 20), (12, 13), (13, 14),\\ (14, 20), (15, 16), (15, 21), (16, 22), (17, 18), (17, 22), (18, 23), (19, 23), (19, 24), (20, 24), (21, 24), \\(21, 25), (22, 25), (23, 25) \}
$

\noindent
$
{\bf x} = \big[\begin{matrix} 2 & -1 & -2 & 3 & -1 & -1 & 2 & -4 & 1 & -3 & -1 & 2 & 2 & -1 & -1 & 3 & 1 & 1 & -2 & 4 & -1  \end{matrix} 
$

\noindent
$
\begin{matrix} -3 & 1 & 2 & -1 & -2 \end{matrix} \big]
$

\section*{Appendix $\mathbf{\Pi}$: Polyhedral seed graphs}

For each cubic polyhedral seed graph $G$, the entry lists $v_3$, $v_2$, $n$, $m$
followed by $m$ edges $E(G)$ and the kernel eigenvector $\bf x$ as
a list of integer entries.

\vspace{\baselineskip}
\noindent
$\boldsymbol{v_3 = 12, v_2 = 0, n=  12, m = 18}$

\noindent
$
E(G) = \{ (0, 4), (0, 7), (0, 8), (1, 5), (1, 7), (1, 9), (2, 6), (2, 9), (2, 11), (3, 7), (3, 10), (3, 11), (4, 8), \\ (4, 10), (5, 8), (5, 9), (6, 10), (6, 11) \}
$

\noindent
$
{\bf x} = \begin{bmatrix} 1 & 1 & 1 & -2 & 1 & -2 & 1 & 1 & -2 & 1 & 1 & -2 \end{bmatrix} 
$

\vspace{\baselineskip}
\noindent
$\boldsymbol{v_3 = 26, v_2 = 0, n=  26, m = 39}$

\noindent
$
E(G) = \{ (0, 1), (0, 2), (0, 3), (1, 4), (1, 5), (2, 6), (2, 7), (3, 8), (3, 9), (4, 10), (4, 11), (5, 12), (5, 13),\\ (6, 14), (6, 15), (7, 8), (7, 16), (8, 17), (9, 10), (9, 18), (10, 19), (11, 12), (11, 20), (12, 13), (13, 14),\\ (14, 20), (15, 16), (15, 21), (16, 22), (17, 18), (17, 22), (18, 23), (19, 23), (19, 24), (20, 24), (21, 24), \\(21, 25), (22, 25), (23, 25) \}
$

\noindent
$
{\bf x} = \big[\begin{matrix} 2 & -1 & -2 & 3 & -1 & -1 & 2 & -4 & 1 & -3 & -1 & 2 & 2 & -1 & -1 & 3 & 1 & 1 & -2 & 4 & -1 & -3   \end{matrix} 
$

\noindent
$
\begin{matrix}  1 & 2 & -1 & -2 \end{matrix} \big]
$

\vspace{\baselineskip}
\noindent
$\boldsymbol{v_3 = 28 , v_2 = 0, n=  28, m = 42}$

\noindent
$
E(G) = \{ (0, 1), (0, 2), (0, 3), (1, 4), (1, 5), (2, 3), (2, 6), (3, 7), (4, 8), (4, 9), (5, 6), (5, 10), (6, 7), (7, 11),\\ 
(8, 12), (8, 13), (9, 10), (9, 14), (10, 15), (11, 12), (11, 16), (12, 16), (13, 17), (13, 18), (14, 19), (14, 20), \\
(15, 21), (15, 22), (16, 17), (17, 18), (18, 22), (19, 22), (19, 23), (20, 24), (20, 25), (21, 23), (21, 25),\\ (23, 26), (24, 26), (24, 27), (25, 27), (26, 27) \}
$

\noindent
$
{\bf x} = \big[\begin{matrix} 2 & 3 & -7 & 4 & -4 & 2 & -6 & 5 & 2 & -5 & 3 & 2 & -1 & 5 & 1 & 3 & -4 & -1 & -1 & -2 & 7 & 1 & -4  \end{matrix} 
$

\noindent
$
\begin{matrix}  3 & 5 & -6 & 1 & -8  \end{matrix} \big]
$

\section*{Appendix N: Toroidal seed graphs}

For each  cubic toroidal seed graph $G$, the entry lists $v_3$, $v_2$, $n$, $m$
followed by $m$ edges $E(G)$ and the kernel eigenvector $\bf x$ as
a list of integer entries.

\vspace{\baselineskip}
\noindent
$\boldsymbol{v_3 = 20, v_2 = 0, n=  20, m = 30}$

\noindent
$
E(G) = \{ (0, 8), (0, 11), (0, 12), (1, 9), (1, 12), (1, 13), (2, 10), (2, 16), (2, 17), (3, 11), (3, 13), (3, 14), \\(4, 11), (4, 17), (4, 18), (5, 12), (5, 15), (5, 19), (6, 13), (6, 15), (6, 16), (7, 14), (7, 17), (7, 18), (8, 14), \\(8, 15), (9, 16), (9, 19), (10, 18), (10, 19) \}
$

\noindent
$
{\bf x} = \begin{bmatrix} 1 & -3 & -1 & 1 & -2 & 2 & 2 & 3 & -4 & -1 & -1 & 1 & 3 & -2 & 1 & -2 & 4 & -3 & 2 & -1 \end{bmatrix} 
$

\vspace{\baselineskip}
\noindent
$\boldsymbol{v_3 = 22 , v_2 = 0, n=  22, m = 33}$

\noindent
$
E(G) = \{ (0, 9), (0, 15), (0, 16), (1, 10), (1, 20), (1, 21), (2, 11), (2, 13), (2, 14), (3, 11), (3, 14), (3, 18),\\ 
(4, 12), (4, 16), (4, 17), (5, 12), (5, 16), (5, 21), (6, 13), (6, 17), (6, 18), (7, 14), (7, 17), (7, 19), (8, 18), \\
(8, 19), (8, 20), (9, 15), (9, 19), (10, 20), (10, 21), (11, 13), (12, 15) \}
$

\noindent
$
{\bf x} = \begin{bmatrix} 2 & -1 & 4 & -5 & -4 & 2 & 3 & 1 & 2 & -3 & -1 & -7 & 1 & 1 & 6 & 2 & 1 & -2 & 1 & -4 & 3 & -2  \end{bmatrix} 
$

\vspace{\baselineskip}
\noindent
$\boldsymbol{v_3 = 24 , v_2 = 0, n=  24, m = 36}$

\noindent
$
E(G) = \{ (0, 11), (0, 12), (0, 16), (1, 12), (1, 13), (1, 14), (2, 13), (2, 14), (2, 15), (3, 13), (3, 15), (3, 18),\\ 
(4, 14), (4, 16), (4, 21), (5, 15), (5, 18), (5, 19), (6, 16), (6, 17), (6, 23), (7, 17), (7, 19), (7, 20), (8, 18),\\
 (8, 20), (8, 21), (9, 19), (9, 21), (9, 22), (10, 20), (10, 22), (10, 23), (11, 22), (11, 23), (12, 17) \}
$

\noindent
$
{\bf x} = \begin{bmatrix} 1 & 1 & 1 & -2 & -2 & 1 & 1 & -2 & 1 & 1 & 1 & -2 & 1 & 1 & -2 & 1 & 1 & -2 & -2 & 1 & 1 & 1 & -2 & 1 \end{bmatrix} 
$

\end{document}